\newcommand{\SSS}{\mathfrak{S}}
\newcommand{\BB}{\mathfrak{B}}
\newcommand{\ol}[1]{\overline{#1}}
\newcommand{\blue}[1]{\textcolor{blue}{#1}}
\newcommand{\brown}[1]{\textcolor{brown}{#1}}
\newcommand{\red}[1]{\textcolor{red}{#1}}
\newcommand{\ogreen}[1]{\textcolor{OliveGreen}{#1}}
\newcommand{\ZZ}{\mathbb{Z}}
\newcommand{\wrp}[2]{\mathfrak{S}_{#1}^{#2}}
\newcommand{\owrp}[2]{\overline{\mathfrak{S}_{#1}^{#2}}}
\newcommand{\floor}[1]{\left\lfloor #1 \right\rfloor}
\newcommand{\GE}{ \mathsf{AExc}}
\DeclareMathOperator{\DescSet}{DescSet}
\DeclareMathOperator{\AscSet}{AscSet}
\DeclareMathOperator{\ExcS}{ExcSet}
\DeclareMathOperator{\exc}{exc}
\DeclareMathOperator{\asc}{asc}
\DeclareMathOperator{\EXC}{EXC}
\DeclareMathOperator{\des}{des}
\DeclareMathOperator{\DES}{DES}
\DeclareMathOperator{\ldes}{ldes}
\DeclareMathOperator{\lasc}{lasc}
\DeclareMathOperator{\lexc}{lexc}
\DeclareMathOperator{\bexc}{bexc}
\DeclareMathOperator{\mo}{mo}
\DeclareMathOperator{\sm}{S}
\DeclareMathOperator{\cyc}{Cyc}
\DeclareMathOperator{\rev}{REV}
\DeclareMathOperator{\Negs}{Negs}
\DeclareMathOperator{\BE}{BE}
\DeclareMathOperator{\B}{B}
\DeclareMathOperator{\Ade}{A}
\DeclareMathOperator{\AEx}{AE}
\DeclareMathOperator{\AExc}{AExc}
\theoremstyle{plain}
\newtheorem{theorem}{Theorem}
\newtheorem{corollary}[theorem]{Corollary}
\newtheorem*{corollary*}{Corollary}
\newtheorem{lemma}[theorem]{Lemma}
\newtheorem{proposition}[theorem]{Proposition}
\theoremstyle{definition}
\newtheorem{definition}[theorem]{Definition}
\newtheorem{example}[theorem]{Example}
\title{A descent-excedance correspondence in colored permutation groups}
\author[1]{Hiranya Kishore Dey\thanks{\tt{hiranyadey@iisc.ac.in}}}
\author[2]{Umesh Shankar\thanks{\tt{204093001@iitb.ac.in, umeshshankar@outlook.com}}}
\author[3]{Sivaramakrishnan Sivasubramanian\thanks{\tt{krishnan@math.iitb.ac.in}}}
\affil[1]{Department of Mathematics, Indian Institute of Science, Bengaluru }
\affil[2,3]{Department of Mathematics, Indian Institute of Technology, Bombay Mumbai 400076, India}
\date{\today}
\begin{document}
\maketitle
\begin{abstract}
It is well known that descents and excedances are equidistributed in the symmetric group. We show that the descent and excedance enumerators, summed over permutations with a fixed first letter are identical when we perform a simple change of the first letter. We generalize this to type B and other colored permutation groups. We are led to defining descents and excedances through linear orders. With respect to a particular order, when the number of colors is even, we get a result that generalizes the type-B results. Lastly, we get a type B counterpart of Conger's result which refines the well known Carlitz identity.
\end{abstract}
\textbf{\small{}Keyword:}{\small{}{ Eulerian statistics, Colored permutation groups, Carlitz identity}}{\let\thefootnote\relax\footnotetext{2020 \textit{Mathematics Subject Classification}. Primary: 05A05, 05A15; 
Secondary: 05E16.}}

\section{Introduction}
\label{sec:intro}

For a positive integer $n$, let $[n]=\{1,2,\dots,n\}$ and let 
$\SSS_n$ denote the set of permutations of the set $[n]$.
For $\pi \in \SSS_n$ with $\pi = \pi_1, \pi_2, \ldots, \pi_n$, let 
$\DES(\pi) = \{i \in [n-1]: \pi_i > \pi_{i+1} \}$ be its set of 
descents and let $\des(\pi) = |\DES(\pi)|$ denote its number 
of descents.  For $\pi \in \SSS_n$ let 
$\EXC(\pi) = \{i \in [n]: \pi_i > i \}$ be its set of excedances 
and let $\exc(\pi) = |\EXC(\pi)|$ denote its number of excedances.
The polynomial
$A_n(t) = \sum_{\pi \in \SSS_n} t^{\des(\pi)}$ 
is defined as the $n$-th Eulerian 
polynomial and is well studied, see, for example, the book 
\cite{petersen-eulerian-nos-book} by Petersen. 
If we define $\AExc_n(t) = \sum_{\pi \in \SSS_n} t^{\exc(\pi)}$,
then, for all non negative integers $n$,
it is well known (see \cite{petersen-eulerian-nos-book})
that $A_n(t) = \AExc_n(t)$.

For a positive 
integer $n$ and for $j \in [n]$,  let $\SSS_{n,j} = 
\{ \pi \in \SSS_n: \pi_1 = j \}$  be 
the subset of $\SSS_n$ consisting of all $\pi$
that start with 
the letter $j$.  While working on riffle shuffles in decks 
with repeated cards,
Conger \cite{conger-first-letter-descent} enumerated descents over the set  $\SSS_{n,j}$. 
Let $\Ade_{n,d,j}$ denote the number of permutations in $\SSS_{n,j}$ 
that have $d$ descents.
The following alternating sum identity was shown by Conger 
\cite[Theorem 1]{conger-first-letter-descent}.  
\begin{theorem}[Conger] 
\label{thm:conger_des-alt-sum_identity}
 With the above notation, we have 
\begin{equation}
\Ade_{n,d,j}=\sum_{k\ge 0} (-1)^{d-k} \binom{n}{d-k}k^{j-1} (k+1)^{n-j}.
\end{equation}
\end{theorem}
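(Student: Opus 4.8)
The plan is to prove the formula by passing through Stanley's theory of $P$-partitions (equivalently, a sorting bijection between integer functions and permutations): I will first produce, for an auxiliary parameter $m$, a polynomial identity in $m$ that encodes $\Ade_{n,d,j}$, and then recover the alternating sum by a routine generating-function inversion. Concretely, for $m \ge 0$ and a function $f\colon [n] \to \{0,1,\dots,m\}$, associate to $f$ the unique permutation $\pi_f \in \SSS_n$ obtained by listing the labels $1,\dots,n$ in weakly decreasing order of $f$-value, breaking ties by increasing label. Standard $P$-partition theory gives that the number of $f$ with $\pi_f = \pi$ equals $\binom{m - \des(\pi) + n}{n}$, since those $f$ are exactly the chains $m \ge f(\pi_1) \ge \dots \ge f(\pi_n) \ge 0$ forced to be strict across the $\des(\pi)$ positions in $\DES(\pi)$, and these are counted by weakly decreasing length-$n$ sequences with values in $\{0,\dots,m-\des(\pi)\}$.

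The crux is to characterize when the first letter of $\pi_f$ is $j$. Since $(\pi_f)_1$ is the smallest label achieving the maximum $f$-value, one checks that $(\pi_f)_1 = j$ if and only if $f(i) < f(j)$ for every $i < j$ (strict, so that no smaller label ties and overtakes $j$) and $f(i) \le f(j)$ for every $i > j$ (weak, since a larger label tying with $j$ still sits after $j$). Hence, fixing $f(j) = v$, there are $v$ admissible values for each of the $j-1$ labels below $j$ and $v+1$ admissible values for each of the $n-j$ labels above $j$, giving exactly $v^{j-1}(v+1)^{n-j}$ such functions. Summing over $v$ and comparing the two ways of counting functions $f$ with $(\pi_f)_1 = j$ yields, for every $m \ge 0$, the key identity
\begin{equation}
\sum_{d \ge 0} \Ade_{n,d,j}\binom{m-d+n}{n} \;=\; \sum_{v=0}^{m} v^{j-1}(v+1)^{n-j}. \label{eq:plan-star}
\end{equation}

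To finish, I would multiply \eqref{eq:plan-star} by $t^m$ and sum over $m \ge 0$. Writing $A_{n,j}(t) = \sum_d \Ade_{n,d,j}\,t^d$ and using $\sum_{m \ge 0}\binom{m-d+n}{n}t^m = t^d/(1-t)^{n+1}$ on the left, together with $\sum_{m \ge 0}\big(\sum_{v=0}^{m} a_v\big)t^m = (1-t)^{-1}\sum_{v \ge 0} a_v t^v$ on the right, gives
\begin{equation}
\frac{A_{n,j}(t)}{(1-t)^{n+1}} \;=\; \frac{1}{1-t}\sum_{v \ge 0} v^{j-1}(v+1)^{n-j}\,t^v, \qquad\text{so}\qquad A_{n,j}(t) = (1-t)^{n}\sum_{v \ge 0} v^{j-1}(v+1)^{n-j}\,t^v. \label{eq:plan-gf}
\end{equation}
Expanding $(1-t)^n = \sum_i (-1)^i \binom{n}{i} t^i$ and extracting the coefficient of $t^d$ (with $i = d-k$, $v = k$) then produces exactly $\Ade_{n,d,j} = \sum_{k \ge 0}(-1)^{d-k}\binom{n}{d-k}k^{j-1}(k+1)^{n-j}$.

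I expect the main obstacle to be the second step: correctly pinning down the strict-versus-weak asymmetry in the first-letter condition (equivalently, the tie-breaking rule defining $\pi_f$), since it is exactly this asymmetry that produces the unbalanced factor $k^{j-1}(k+1)^{n-j}$ rather than a symmetric expression; a sign or boundary slip there would corrupt the whole formula. The remaining ingredients — uniqueness of $\pi_f$, the $P$-partition count $\binom{m-\des(\pi)+n}{n}$, and the generating-function bookkeeping leading to \eqref{eq:plan-gf} — are standard and should go through directly.
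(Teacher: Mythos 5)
Your proof is correct, and it is genuinely different from anything in the paper: the paper does not prove Theorem \ref{thm:conger_des-alt-sum_identity} at all, but simply quotes it from Conger's work, so your argument supplies a self-contained proof where the paper has none. Each step checks out: the fiber count $\binom{m-\des(\pi)+n}{n}$ for the sorting map $f\mapsto\pi_f$ is the standard $P$-partition count (the drops are forced to be strict exactly at $\DES(\pi)$ because ties are broken by increasing label); the first-letter characterization ($f(i)<f(j)$ for $i<j$, $f(i)\le f(j)$ for $i>j$) is exactly right and is what produces the asymmetric weight $v^{j-1}(v+1)^{n-j}$; and the two generating-function manipulations together with the final coefficient extraction are routine and correct (with the usual convention $0^0=1$ covering the case $j=1$, $v=0$). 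It is worth noting that your intermediate identity $A_{n,j}(t)=(1-t)^n\sum_{v\ge 0}v^{j-1}(v+1)^{n-j}t^v$ is precisely the type A analogue of the paper's Theorem \ref{thm: typeb-typea-des-carlitz}, which the paper proves in Section \ref{sec:proof_thm-X} by a completely different mechanism: insertion recurrences for the numbers $B_{n,d,k}$ (Theorem \ref{thm:recurrence_descent_for_type_b}), a differential-operator recurrence for the polynomials $B_{n,k}(t)$ (Proposition \ref{rec:type-b-rec-poly}), and induction on $n$. Your route buys a bijective, single-shot argument that yields the alternating sum, the Carlitz-type identity, and in fact a Worpitzky-style refinement (the identity at each finite truncation $m$) all at once; the paper's recurrence route buys a template that transfers to the hyperoctahedral group, where the naive sorting argument would need the signed alphabet and the order $>_{\sm}$ handled with some care. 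Adapting your $P$-partition argument to signed permutations would give an alternative, induction-free proof of Theorem \ref{thm: typeb-typea-des-carlitz}, which could be a worthwhile remark.
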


A natural question is to ask if the number of permutations in $\SSS_n$ 
starting with the letter $j$ and having $d$ excedances  satisfies a similar
identity.  For a positive
integer $n$ and for $1 \leq j \leq n$, define $A_{n,j}(t) = 
\sum_{\pi \in \SSS_{n,j}}t^{\des(\pi)}$ 
and $\AExc_{n,j}(t) = \sum_{\pi \in \SSS_{n,j}}t^{\exc(\pi)}$ to  be the polynomials
enumerating descents 
and excedances respectively in $\SSS_{n,j}$.
For $n=6$, we get the following table (seen better on a 
colour monitor). 
\bigskip

$
\begin{array}{|l|l|l|} \hline
j & A_{6,j}(t) \mbox{ wrt descents} & \GE_{6,j}(t) \mbox{ wrt excedances} \\ \hline
1 & 1 + 26t+ 66t^2+ 26t^3+ t^4  &  1 + 26t+ 66t^2+ 26t^3+ t^4  \\ \hline \hline
2 & \blue{16t+ 66t^2+ 36t^3 + 2t^4} &  \brown{t+ 26t^2+ 66t^3+ 26t^4+ t^5} \\ \hline
3 & \red{ 8t+ 60t^2+ 48t^3+ 4t^4} &  \ogreen{ 2t+ 36t^2+ 66t^3+ 16t^4} \\ \hline
4 & 4t+ 48t^2+ 60t^3+ 8t^4  &   4t+ 48t^2+ 60t^3+ 8t^4 \\ \hline
5 & \ogreen{2t+ 36t^2+ 66t^3+ 16t^4} & \red{ 8t+ 60t^2+ 48t^3+ 4t^4} \\ \hline
6 & \brown{t+ 26t^2+ 66t^3+ 26t^4+ t^5}  & \blue{16t+ 66t^2+ 36t^3 + 2t^4}  \\ \hline
\end{array}
$

\bigskip

A corollary of our Theorem \ref{thm: criss-cross} (that we 
prove in Section \ref{sec: proofs}) 
is the following.  





\begin{corollary}
    \label{thm: des-exc-first-perm}
    When $n\ge 2$, we have $A_{n,1}(t)=\AExc_{n,1}(t)=A_{n-1}(t).$
    When $n\ge 2$ and $2\le j\le n$, we have 
$A_{n,j}(t)=\AExc_{n,n+2-j}(t).$
Thus, for a fixed $n$, the two families of polynomials $A_{n,i}(t)$ and 
$\AExc_{n,i}(t) $ are the same up to a permutation of 
the second index $i$ in the subscript. 
\end{corollary}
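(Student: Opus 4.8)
The plan is to prove the statement by induction on $n$, reducing everything to two recursions: a clean recursion for the descent enumerators $A_{n,j}(t)$ obtained by deleting the first letter, and a companion recursion for the excedance enumerators $\AExc_{n,i}(t)$ obtained by inserting the largest value into a cycle. Once these two recursions are seen to be mirror images of one another under $i\mapsto n+2-j$ (with the index $1$ fixed), the corollary follows. I would first dispose of $j=1$ separately: if $\pi_1=1$ then position $1$ is neither a descent (since $1<\pi_2$) nor an excedance (since $\pi_1=1\not>1$), and deleting the leading $1$ and subtracting $1$ from every remaining letter is a single map $\SSS_{n,1}\to\SSS_{n-1}$ that simultaneously preserves $\des$ and $\exc$. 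Hence $A_{n,1}(t)=A_{n-1}(t)$ and $\AExc_{n,1}(t)=\AExc_{n-1}(t)$, and these agree by the classical equidistribution $A_{n-1}(t)=\AExc_{n-1}(t)$ recalled in the introduction; this settles the first assertion and the index $i=1$.

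For $2\le j\le n$ I would next record the descent recursion. Writing $\pi=j,\pi_2,\dots,\pi_n$ and deleting the first letter, the word $\pi_2\cdots\pi_n$ standardizes to a permutation $\pi'\in\SSS_{n-1}$ with $\des(\pi)=\des(\pi')+[\pi_2<j]$, and under standardization the event $\pi_2<j$ becomes the statement that the first letter of $\pi'$ is at most $j-1$. Summing gives
\[
A_{n,j}(t)=t\sum_{i=1}^{j-1}A_{n-1,i}(t)+\sum_{i=j}^{n-1}A_{n-1,i}(t),
\]
which determines the whole array $(A_{n,j})$ from the base case. (One checks, e.g., $A_{n,n}(t)=t\,A_{n-1}(t)$, consistent with the tables.)

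The key step, and the one I expect to be the main obstacle, is the companion recursion for $\AExc_{n,i}(t)$. Since excedance is not local to the first position, deleting the first letter is no longer clean; instead I would build $\sigma\in\SSS_n$ from $\tau\in\SSS_{n-1}$ by inserting the largest value $n$ into a cycle, choosing $m\in[n-1]$ and setting $\sigma(m)=n,\ \sigma(n)=\tau(m)$ (or letting $n$ be a fixed point). Such an insertion raises $\exc$ by $1-[\tau(m)>m]$ and, crucially, changes the first letter \emph{only} when $m=1$, in which case the new first letter equals $n$. Isolating $m=1$ yields the clean boundary recursion
\[
\AExc_{n,n}(t)=t\,\AExc_{n-1,1}(t)+\sum_{i=2}^{n-1}\AExc_{n-1,i}(t),
\]
which already matches the descent recursion for $A_{n,2}(t)$ under $i=n+2-j$. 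For the remaining first letters $i\le n-1$, which are preserved by the insertion, the insertion multiplicities depend on $\exc(\tau)$; I would handle this by the induction itself, feeding in the inductive hypothesis $\AExc_{n-1,i}=A_{n-1,n+1-i}$ (for $i\ge2$) to evaluate the $\exc$-weighted sums, and then verifying termwise that the resulting expression for $\AExc_{n,i}(t)$ coincides with $A_{n,n+2-i}(t)$ as computed from the descent recursion. Establishing this termwise agreement is the technical heart of the argument.

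Once both arrays are known to satisfy recursions interchanged by $i\mapsto n+2-j$ with a common base case, induction gives $A_{n,j}(t)=\AExc_{n,n+2-j}(t)$ for all $2\le j\le n$, and summing over $j$ shows that $\{A_{n,i}\}_i$ and $\{\AExc_{n,i}\}_i$ are the same family up to a permutation of the index, as claimed. As an alternative route I would search for a direct statistic-preserving bijection $\SSS_{n,j}\to\SSS_{n,n+2-j}$ carrying $\des$ to $\exc$, presumably a first-letter-refined version of Foata's fundamental transformation; the delicate point there is exactly the same as above, namely controlling the image of the first letter rather than the equidistribution itself, and I expect that bookkeeping to be where the real difficulty lies.
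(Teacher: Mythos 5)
Your treatment of $j=1$, your delete-first-letter descent recursion, and your boundary identity for $\AExc_{n,n}(t)$ are all correct. The genuine gap is precisely the step you yourself flag as the ``technical heart.'' Carrying out your cycle-insertion count for a preserved first letter $2\le i\le n-1$: if $\tau\in\SSS_{n-1,i}$ has $e=\exc(\tau)$, then position $1$ is an excedance of $\tau$, so among the $n-1$ admissible insertions (at $m\in\{2,\dots,n-1\}$, or making $n$ a fixed point) exactly $e$ preserve $\exc$ and $n-1-e$ raise it by one, giving
\begin{equation*}
\AExc_{n,i}(t)=(n-1)t\,\AExc_{n-1,i}(t)+t(1-t)\tfrac{d}{dt}\AExc_{n-1,i}(t).
\end{equation*}
After substituting the inductive hypothesis $\AExc_{n-1,i}=A_{n-1,n+1-i}$, you would have to show that this expression equals
\begin{equation*}
A_{n,n+2-i}(t)=t\sum_{l=1}^{n+1-i}A_{n-1,l}(t)+\sum_{l=n+2-i}^{n-1}A_{n-1,l}(t),
\end{equation*}
as computed from your descent recursion. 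These are two genuinely different linear combinations (one involves a derivative, the other a sum over all smaller indices), so there is nothing to check ``termwise'': their equality is a nontrivial identity among the polynomials $A_{n-1,l}(t)$ which is essentially equivalent to the statement being proved. As written, the plan cannot be completed by mere comparison; it needs a new combinatorial input.

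The fix is to pair the cycle insertion with a descent recursion that actually mirrors it, rather than with the delete-first-letter one. Given $\pi'\in\SSS_{n-1,j-1}$ with $d$ descents ($2\le j\le n$), add $1$ to every letter and insert the new smallest letter $1$ into any of the $n-1$ positions other than the front: the $d$ insertions at descents preserve $\des$, while the $n-1-d$ insertions at ascents or at the end raise it by one, so
\begin{equation*}
A_{n,j}(t)=(n-1)t\,A_{n-1,j-1}(t)+t(1-t)\tfrac{d}{dt}A_{n-1,j-1}(t).
\end{equation*}
This has literally the same coefficients as the excedance recursion above, and since the index map $i\mapsto n+2-i$ sends the level-$(n-1)$ index to $n+1-i=(n-1)+2-i$, the induction closes at once (your $j=1$ and $i=n$ arguments handle the remaining indices). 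Note also that this entire inductive scheme is quite different from the paper's route: there the corollary is the one-color specialization of Theorem \ref{thm: criss-cross}, proved by a single explicit bijection --- the complementation $i\mapsto n+2-i$ (fixing $1$) applied in cycle notation, followed by a Foata-style removal of parentheses --- which avoids induction entirely and yields the result uniformly for all colored permutation groups.
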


We generalise this result to colored permutations as follows.
For a positive integer $d$, let $[d]_0 = \{0,1,\ldots,d-1\}$ be 
a set of size $d$. 
To define descents and excedances over $\SSS_n \wr \ZZ_d$, 
we need a linear order $L$ on $[n]\times [d]_0$.  
With respect to such a linear order $L$, for a permutation $\pi$, 
we denote descents as $\ldes(\pi)$ and excedances 
as $\lexc(\pi)$ (see Section \ref{sec:prelims} for our definition).  
We denote $\SSS_n \wr \ZZ_d$ alternatively as 
$\SSS_n^d$ and call this the colored permutation group.  
For any linear order $L$,  and positive integers 
$n,d$, our general result 
is the following equidistribution of $\ldes$ and $\lexc$
on the wreath product $\SSS_n^d$.  
A similar equidistribution of descents and excedances result was 
proved by Steingrimsson \cite{steingrimsson-indexed-perms}.  Though 
Steingrimsson proves the equidistribution  of descent and excedance over $\SSS_n \wr \ZZ_d$, his definitions are different.  The following result is proved in Section \ref{sec: proofs}.

\begin{theorem}\label{thm: ldes-lexc}
For any linear order $>_L$ on $[n] \times [d]_0$, there exists a 
bijection $\Phi: \wrp{n}{d} \rightarrow \wrp{n}{d}$ such that 
$\ldes(p)=\lexc(\Phi(p))$ for all $p\in \wrp{n}{d}$. 
\end{theorem}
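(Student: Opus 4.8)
The plan is to realize each $p \in \wrp{n}{d}$ as a genuine permutation of the $nd$-element set $\Omega = [n] \times [d]_0$ via the imprimitive wreath action, so that $\ldes(p)$ becomes the number of $>_L$-descents of the one-line word of $p$ read off on $(\Omega, >_L)$, and $\lexc(p)$ becomes the number of points $\omega \in \Omega$ with $p(\omega) >_L \omega$. In this language the statement is exactly the equidistribution of descents and excedances with respect to the total order $>_L$, but now restricted to the subgroup $\wrp{n}{d} \le \SSS_\Omega$ rather than to the full symmetric group. First I would fix this dictionary carefully against the definition in Section~\ref{sec:prelims}, and record that for $d = 1$ it specializes to the classical identity $A_n(t) = \AExc_n(t)$ quoted in the introduction, which is the prototype we aim to generalize.

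The bijection I propose is a Foata-type cycle-to-word transformation adapted to $>_L$. Given $p$, decompose it into disjoint cycles acting on $\Omega$; write each cycle starting at its $>_L$-maximal element, list the cycles in increasing order of these openers, erase the cycle boundaries to obtain a word $w$, and interpret $w$ as the one-line notation of a new permutation of $\Omega$. The heart of this part is the standard position-by-position check showing that each $\omega$ with $p(\omega) >_L \omega$ accounts for exactly one $>_L$-descent of $w$ and conversely; once the opening and ordering conventions are pinned to $>_L$, this local bookkeeping goes through essentially verbatim and yields a bijection intertwining $\ldes$ and $\lexc$. Taking its inverse if necessary gives the direction $\ldes(p) = \lexc(\Phi(p))$ demanded by the statement.

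The main obstacle is that this transformation a priori lands only in $\SSS_\Omega$, and the naive Foata map does \emph{not} preserve the subgroup $\wrp{n}{d}$: a block $\{i\} \times [d]_0$ need not be sent to a block by the reconstructed word. The real work is therefore to make the cycle conventions $\ZZ_d$-equivariant with respect to the block structure. Here I would exploit that the cycles of $p$ on $\Omega$ organize into $\ZZ_d$-symmetric families lying over the cycles of the underlying element of $\SSS_n$, and choose openers and cycle orderings compatibly across each such family, so that the reconstructed word is again block-structured, i.e. $\Phi(p) \in \wrp{n}{d}$. Verifying this equivariance \emph{simultaneously} with the descent/excedance count, uniformly over an arbitrary order $>_L$, is where the argument is delicate, and I expect this to be the crux of the proof.

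As a safety net, if making the explicit map block-preserving proves too rigid for a general $>_L$, one can instead prove the enumerator identity $\sum_{p} t^{\ldes(p)} = \sum_{p} t^{\lexc(p)}$ directly, for instance by a structural induction that builds $\Omega$ up in $>_L$-order one block at a time and shows that both sides obey the same recurrence. Since the theorem asks only for the existence of $\Phi$, matching equal coefficients then produces a bijection, albeit a less canonical one than the cycle map above.
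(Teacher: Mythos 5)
Your opening dictionary is false, and everything downstream inherits the error. Under the imprimitive action of $\wrp{n}{d}$ on $\Omega=[n]\times[d]_0$, the paper's statistics are \emph{not} the restrictions to this subgroup of the descent and excedance statistics of $\SSS_\Omega$. The paper's $\ldes(p)$ counts descents of the length-$n$ word $(\pi_1,c_1),\dots,(\pi_n,c_n)$ with positions in their natural order, and $\lexc(p)$ counts the $i\in[n]$ with $(\pi_{\pi_i},c_{\pi_i})>_L(\pi_i,c_i)$: both involve only $n$ comparisons, whereas your proposed statistics involve $nd$ positions or points. Concretely, for the paper's Example \ref{ex: running-example}, $p=(241563,013302)$ with the order of Definition \ref{defn:order_l}, the paper computes $\lexc(p)=4$; but under the wreath action every point of the block $\{1\}\times[4]_0$ is an excedance (since $p(1,j)=(2,j)>_L(1,j)$ for all $j$), and so is $(2,0)$ (since $p(2,0)=(4,1)>_L(2,0)$), giving at least five. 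Likewise $\ldes(p)=1$, yet the one-line word of $p$ on $(\Omega,>_L)$ already has descents at $(5,3)\to(6,0)$ and at $(3,2)\to(2,1)$. So no realization inside $\SSS_\Omega$ makes your two identifications hold, and the "crux" your third paragraph is devoted to --- making Foata's cycle conventions $\ZZ_d$-equivariant so that the reconstructed word is block-structured --- is a difficulty manufactured by this wrong setup; moreover you do not resolve it, you only predict that it can be resolved. Your fallback (an induction on enumerators) is stated as a hope, with no recurrence exhibited, so it cannot carry the proposal either.

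The missing idea is the paper's notion of cycle decomposition for colored permutations (Section \ref{sec:prelims}): decompose the \emph{underlying} permutation $\pi\in\SSS_n$ into cycles on $[n]$ and let each value carry its color as an inert decoration. Then every $p$ corresponds to a permutation $f$ of the $n$-element set $\Omega'=\{(\pi_i,c_i):i\in[n]\}\subseteq[n]\times[d]_0$ (one decorated pair per value), defined by $f\bigl((\pi_i,c_i)\bigr)=(\pi_{\pi_i},c_{\pi_i})$, and both statistics become the ordinary ones for $f$ on the linearly ordered set $(\Omega',>_L)$: $\ldes(p)$ is a word-descent count and $\lexc(p)$ is the excedance count of $f$. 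Foata's transformation --- end each cycle at its $>_L$-largest pair, order cycles with these last pairs decreasing, erase parentheses, then reverse --- now applies verbatim, and its output is a word on the \emph{same} $n$ decorated pairs, hence automatically an element of $\wrp{n}{d}$; the block-preservation problem never arises because colors are carried along as decorations rather than acted upon. That is exactly the paper's proof, and it is the step your proposal lacks.
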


For some linear orders, one can get more information.   In Section 
\ref{sec:prelims}, we define
the min-one linear order on elements of colored permutations.
It will be clear when there is a single color,  for 
all positive integers $n$ that the min-one 
order reduces to the natural order on  $\mathbb{N}$
and that $\ldes$ and $\lexc$ with respect to these orders
give the usual definition of $\des$ and $\exc$ in $\SSS_n$ respectively.
Let $S_{(i,j)}$ be the set of colored permutations in 
$\wrp{n}{d}$  that start with the letter $i$ that further have 
colour $j$ for the letter $i$.
With respect to the min-one order, we show that enumerating 
$\ldes$ over $S_{(i,j)}$ gives the 
same polynomial as enumerating $\lexc$ over $S_{(\sigma(i),\mu(j))}$
for some permutations $\sigma$ and $\mu$.
Our result proved again in Section \ref{sec: proofs} is the following.

\begin{theorem}
\label{thm: criss-cross}
With respect to the min-one order, for all positive integers $n,d$, 
there exists a bijection 
$\Gamma: \wrp{n}{d} \rightarrow \wrp{n}{d}$ such that 
$\ldes(p)=\lexc(\Gamma(p))$ for all $p\in \wrp{n}{d}$ which 
additionally satisfies $\Gamma(S_{(i,j)})=S_{(n+2-i,d+1-j)}$, 
for $i\ge 2$ and $j$. When $i=1$ and $j$, we 
have $\Gamma(S_{(1,j)})=S_{(1,d+1-j)}$. 
\end{theorem}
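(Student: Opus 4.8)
The plan is to construct the bijection $\Gamma$ explicitly and then verify the two demands separately: the statistic equality $\ldes(p)=\lexc(\Gamma(p))$ and the prescribed action $\Gamma(S_{(i,j)})=S_{(n+2-i,\,d+1-j)}$ on first-letter classes. Since Theorem~\ref{thm: ldes-lexc} already supplies, for the min-one order, a bijection $\Phi$ on $\wrp{n}{d}$ with $\ldes(p)=\lexc(\Phi(p))$, the genuinely new content is the bookkeeping of the first letter $i$ and its colour $j$. I would therefore begin by analysing, purely for the min-one order, how the pair $(i,j)=(\pi_1,\text{its colour})$ enters each statistic: I would record the contribution of position $1$ (together with any boundary convention built into the definitions in Section~\ref{sec:prelims}) to $\ldes(p)$ and to $\lexc(p)$, as an explicit function of $(i,j)$. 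These ``first-slot'' formulas are the technical backbone, since they are exactly what let me predict where the first colored letter must land.

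Next I would introduce two reflection involutions adapted to the min-one order: a value-flip $R$ that fixes the minimal letter $1$ and sends $i\mapsto n+2-i$ on $\{2,\dots,n\}$, together with a colour-flip $C$ given by $j\mapsto d+1-j$. The crucial observation is that $i\mapsto 2+n-i$ is precisely the order-reversing involution of the set $\{2,\dots,n\}$, while $1$ is retained as the distinguished global minimum; this is what accounts for the shift $n+2-i$ rather than $n+1-i$, and it is exactly the reason the case $i=1$ must be split off. I would then set $\Gamma$ to be a composition of $\Phi$ with $R$ and $C$, in the order dictated by the first-slot formulas, and check that the composite sends $S_{(i,j)}$ to $S_{(n+2-i,\,d+1-j)}$ for $i\ge 2$ and $S_{(1,j)}$ to $S_{(1,\,d+1-j)}$. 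Bijectivity is immediate since every factor is a bijection, and the statistic equality is maintained provided $R$ and $C$ transport $\ldes$ and $\lexc$ in the way computed from the first-slot formulas.

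The main obstacle I anticipate is precisely this first-letter control: a generic descent-to-excedance correspondence of Foata type scrambles the one-line first letter, so the heart of the argument is to make the first-position action of $\Phi$ explicit enough that it can be corrected by the fixed reflections $R$ and $C$ -- equivalently, to build $\Gamma$ so that it peels off and reinserts the first colored letter in a controlled manner. The asymmetry at $i=1$ is the delicate point. Because $1$ is the global minimum for the min-one order, position $1$ can never be an $\ldes$ there, and the value $1$ at position $1$ can never be an $\lexc$; I would therefore treat the class $S_{(1,j)}$ on its own, strip the minimal letter and standardise to reduce it to an instance on $\wrp{n-1}{d}$, and check that the colour-flip $j\mapsto d+1-j$ is the only transformation surviving on this class. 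This reduction is what forces the identity $A_{n,1}(t)=\AExc_{n,1}(t)=A_{n-1}(t)$, and confirms that $\Gamma$ fixes the value $1$ in the first slot while merely reversing its colour.
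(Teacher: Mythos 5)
There is a genuine gap, and it lies exactly where you place the weight of the argument: the claim that the bijection $\Phi$ of Theorem~\ref{thm: ldes-lexc} can be corrected by fixed reflections $R$ (values) and $C$ (colours) so as to control the first letter. This cannot work, because the first letter of $\Phi(p)$ is not a function of the first letter of $p$. Recall $\Phi=\rev\circ\Phi'$, where $\Phi'$ writes each cycle with its largest pair last and lists cycles with these maxima decreasing; hence the first letter of $\Phi(p)$ is the \emph{smallest cycle-maximum} of $p$, a global quantity. Already for $d=1$, $n=3$: both $p=213$ and $p=231$ start with the letter $2$, yet $\Phi(213)=213$ starts with $2$ while $\Phi(231)=321$ starts with $3$. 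Since $R$ and $C$ act letter by letter, no composition $R C\circ\Phi$, $\Phi\circ RC$, etc.\ can send all of $S_{(i,j)}$ into a single class $S_{(n+2-i,d+1-j)}$: the correction needed depends on the whole cycle structure, not on $(i,j)$. Moreover, letter-wise reflections do not ``transport'' the statistics in any usable way: under the min-one order a value/colour flip \emph{complements} descents among pairs of value $\geq 2$ rather than preserving them, and it scrambles excedances entirely (an excedance compares the pair at position $i$ with the pair at position $\pi_i$, and flipping values changes which positions get compared). So both halves of your ``provided'' clause fail.

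What is actually needed --- and what the paper does --- is to abandon $\Phi$ and build a \emph{different} cycle-to-word transition with the reflection applied inside the cycle notation: replace every pair $(i,j)$ occurring in the cycle decomposition of $p$ by $s(i,j)$ (your $R\times C$, with $s(1,j)=(1,d+1-j)$), then write each cycle with its \emph{smallest} pair last, list the cycles with these last elements \emph{increasing}, and erase parentheses. Because $s$ keeps the value-$1$ pairs at the bottom of the min-one order, the first cycle of the output is the one containing the letter $1$ and it ends with that letter; the output word therefore begins with the cycle-successor of the $1$-pair, which is precisely $s(\pi_1,c_1)$. That arrangement convention --- not composition with reflections --- is the ``controlled peeling'' you were after, and the same convention is what converts each $L$-excedance of $p$ into an $L$-descent of the image (with the position of the letter $1$ absorbing the $i=1$ anomaly). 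Finally, your fallback for the class $S_{(1,j)}$, stripping the letter $1$ and standardising to $\wrp{n-1}{d}$, is also unsound for excedances: deleting $1$ and shifting values makes the old letter $2$ the new global minimum, which reorders pairs under the min-one order (for instance $(3,0)<_{\mo}(2,1)$, but their standardisations satisfy $(1,1)<_{\mo}(2,0)$), so $\lexc$ is not preserved by that reduction.
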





With respect to the min-one order, for positive integers $n,d$, 
the polynomials enumerating $\ldes$ in $\SSS_n^d$ have degree $n-1$.
Recall that $\BB_n$ is the group of signed permutations over $[\pm n]$
and enumeration of $\des_B$ over $\BB_n$ gives a degree $n$ polynomial.
Thus, if we use the min-one order and have two colors,  we will 
not get a generalizations of results for $\BB_n$.

To obtain a result that generalises results known over $\BB_n$, we 
consider colored permutations with an even number of colors and 
append a zero to the front of all such permutations.  We denote 
this as $\owrp{n}{2d}$. 
The zero is also given a place in the linear order.
We thus define our second linear order called the symmetric order
with respect to which, enumeration
of $\ldes$ gives a degree $n$ polynomial for all $n,d$.  Further, this 
linear order gives the standard 
linear order on $\mathbb{Z}$ when we have two colors and further,
in this case,  $\ldes$ reduces to $\des_B$.  


We define a statistic $\bexc$ that generalises Brenti's type $B$ excedance
statistic to colored permutations with an even number of colors. 
Our second main result, proved in Section \ref{sec:coloured_perms}, is the following.

\begin{theorem}
\label{thm: ldes-bexc}
With respect to the symmetric order,  for even colored permutation groups 
$\owrp{n}{2d}$, we have a bijection $\Gamma: \owrp{n}{2d}\mapsto \owrp{n}{2d}$ 
satisfying $\ldes(p)=\bexc(\Gamma(p))$. Further, our bijection $\Gamma$ 
satisfies $\Gamma(S_{(i,j)})=S_{(i,\ol{j})}$ and $\Gamma(S_{(i,\ol{j})})=S_{(i,j)}$ 
when $1\le i\le n$ and $1\le j \le d$. 
\end{theorem}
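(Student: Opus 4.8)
The plan is to prove Theorem~\ref{thm: ldes-bexc} by reusing the machinery behind Theorem~\ref{thm: criss-cross}, since the two statements are structurally parallel: both assert a $\ldes$-to-(exc)equidistribution that refines to prescribed behavior on the first-letter blocks $S_{(i,j)}$. The essential new ingredient is passing from the min-one order on $\wrp{n}{d}$ to the symmetric order on the augmented group $\owrp{n}{2d}$, and from $\lexc$ to the type-$B$-flavored statistic $\bexc$. So the first step is to unwind the definitions from Section~\ref{sec:prelims} and Section~\ref{sec:coloured_perms}: write down explicitly, for the symmetric order, what $\ldes(p)$ counts (including the effect of the appended leading zero, which carries a fixed position in the order and so contributes a potential descent/ascent at the front), and what $\bexc(p)$ counts as the colored generalization of Brenti's type-$B$ excedance statistic. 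The goal of this bookkeeping is to exhibit the symmetric order as a ``doubled'' version of the min-one order, so that the bijection $\Gamma$ can be assembled from a color-reflection together with the first-letter transformation already used in Theorem~\ref{thm: criss-cross}.

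Next I would construct the bijection $\Gamma$ itself. The prescribed block behavior $\Gamma(S_{(i,j)})=S_{(i,\ol{j})}$ and $\Gamma(S_{(i,\ol{j})})=S_{(i,j)}$ is the key hint: unlike the min-one bijection of Theorem~\ref{thm: criss-cross}, $\Gamma$ fixes the underlying first letter $i$ and acts only on the sign/color component, swapping $j \leftrightarrow \ol{j}$. This strongly suggests $\Gamma$ is built as a composition of (a) a global color-flip involution $c \mapsto 2d-1-c$ (or whatever reflection the symmetric order pairs positive with negative colors under), which realizes the $j \leftrightarrow \ol{j}$ swap on every letter, followed by (b) the excedance-producing rearrangement inherited from the earlier construction. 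I would define $\Gamma$ in stages and then verify two things in tandem: that the composite sends $S_{(i,j)}$ to $S_{(i,\ol{j})}$ and vice versa, and that it converts $\ldes$ into $\bexc$ letter by letter. The verification of the statistic equality is where I expect to lean on a fundamental-transform / Foata-type argument: write $p$ in an appropriate cycle or word form under the symmetric order, read off the descent set of a canonically standardized word, and match it against the excedance set of the image under $\Gamma$.

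The main obstacle, I expect, is handling the appended zero and the parity conventions of the symmetric order simultaneously. With an even number $2d$ of colors, the symmetric order is designed so that it specializes to the integer order on $[\pm n]$ (with $d=1$ giving $\BB_n$ and $\ldes = \des_B$), which means the leading $0$ interacts asymmetrically with positive- versus negative-colored first letters. Concretely, the contribution to $\ldes$ coming from the boundary between the appended $0$ and $p_1$ depends on whether $p_1$ is ``positive'' or ``negative'' in the symmetric order, and this is exactly the information tracked by the $j$-versus-$\ol{j}$ distinction on the first letter. So the delicate point is showing that the color-swap in $\Gamma$ trades a boundary descent for a boundary excedance (or the absence of one) in precisely the right way, so that the total counts agree and the block map comes out as stated. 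I would isolate this as a separate lemma comparing the boundary contribution of $\ldes$ on $S_{(i,j)}$ with the boundary contribution of $\bexc$ on $S_{(i,\ol{j})}$, and then the bulk of the statistic (the non-boundary positions) should follow from the same argument that powered Theorem~\ref{thm: criss-cross}.

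Finally, once $\Gamma$ is shown to be a well-defined map with $\ldes(p)=\bexc(\Gamma(p))$ and the correct block images, bijectivity comes essentially for free: the block condition pairs $S_{(i,j)}$ with $S_{(i,\ol{j})}$ reciprocally, so $\Gamma$ is an involution on the union of each such pair (or at worst its two-sided inverse is visibly of the same form), and a map that is injective on a finite set mapping $\owrp{n}{2d}$ into itself is automatically a bijection. I would close by noting the specialization $d=1$ recovers the known type-$B$ descent-excedance correspondence over $\BB_n$, confirming that the symmetric order is the right generalization and that $\bexc$ reduces to Brenti's statistic, as promised in the paragraph preceding the theorem.
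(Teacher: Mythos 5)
Your high-level route --- a Foata-type cycle-to-word transform combined with a color involution, then erasing parentheses --- is indeed the paper's route, but your concrete choice of flip is wrong, and it fails at exactly the feature your plan never engages with: the fixed-point clause in the definition of $\bexc$. By definition, $\bexc(p)$ counts, besides the symmetric-order excedances, every index with $\pi_i = i$ and $c_i < 0$. The paper therefore applies the color flip $t(i,j)=(i,\ol{j})$ \emph{only to the non-fixed pairs} of the cycle decomposition; singleton cycles keep their colors, each cycle is written with its smallest first component (absolute value, not symmetric-order value) last, the cycles are listed with these minima increasing, and then the parentheses are removed. A negative fixed point thus stays negative in the word and, being preceded by a letter of smaller absolute value or by the leading $0$, contributes exactly one descent; a positive fixed point contributes none. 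Your \emph{global} flip on every letter destroys this: take $p = 0\,1_1 2_1 \cdots n_1$ (the identity with all colors positive), so $\bexc(p)=0$; after a global flip every letter is $<_{\sm} 0$, so the letter following the leading $0$ already forms a descent and $\ldes \ge 1$, no matter how the word is rearranged. No rearrangement step can repair a recoloring that does not see the cycle structure.

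There is a structural reason your plan cannot be patched while keeping its motivation. You chose the global flip to force the block swap $S_{(i,j)} \leftrightarrow S_{(i,\ol{j})}$ ``on every letter,'' but that swap is incompatible with the statistic when the first letter is a fixed point: for $p \in S_{(1,j)}$ the letter $1$ is necessarily a singleton cycle, and already for $n=1$, $d=1$ one has $\sum_{p \in S_{(1,1)}} t^{\ldes(p)} = 1$ while $\sum_{q \in S_{(1,\ol{1})}} t^{\bexc(q)} = t$, so \emph{no} bijection satisfies both $\ldes(p)=\bexc(\Gamma(p))$ and $\Gamma(S_{(1,j)})=S_{(1,\ol{j})}$; the paper's construction in fact maps $S_{(1,j)}$ to itself, and the $i=1$ case of the block property must be read with that amendment (compare Theorem \ref{thm: criss-cross}, where $i=1$ is singled out). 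For $i\ge 2$ the pair $(i,j)$ in first position is never a fixed pair, so the non-fixed-point flip automatically yields the swap $S_{(i,j)} \leftrightarrow S_{(i,\ol{j})}$ --- that is where the block property comes from, not from a global recoloring. Two further gaps: the cycle rearrangement must be governed by first components rather than by the symmetric order inherited from Theorem \ref{thm: criss-cross} (already for the flipped $2$-cycle $(1_{\ol{1}}2_{\ol{1}})$ the symmetric-order arrangement gives $2$ descents against $\bexc=1$, since among negative letters larger absolute value means smaller in $>_{\sm}$); and bijectivity is not ``free'' from the block pairing --- it comes from recoverability of the canonical cycle form from the word, the same Foata-inverse argument as in Theorem \ref{thm: ldes-lexc}.
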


When the number of colors is two, Theorem \ref{thm: ldes-bexc} gives us some 
corollaries.  Define the following restricted type B Eulerian numbers 
that count descents and excedances when the starting letter and its colour 
are fixed. Let
$B_{n,d,k} = \{ \pi \in \BB_n : \des_B(\pi)=d, \pi(1)=k \},$ and let  
$BE_{n,d,k}  =  \{ \pi \in \BB_n : \exc_B(\pi)=d, \pi(1)=k \}.$
We define the set $\BB_{n,k}=\{\pi \in \BB_n : \pi_1=k \}$.
 When $k\in [-n,n]\backslash \{0\}$, define the polynomials $$B_{n,k}(t)=\sum_{\pi \in \BB_{n,k}} t^{\des_B(\pi)}, \hspace{1cm} BE_{n,k}(t)=\sum_{\pi\in\BB_{n,k}}t^{\exc_B(\pi)}.$$ 
When $d=2$ we obtain the following corollary.

\begin{corollary}
    \label{thm:equidistribution_for_type_B}
	For positive integers $n,k$ with  $1 \leq k \leq n$, the following are equidistributed. 
	\begin{enumerate}
		\item
		\label{item:k=1_type_b} When $k=1$, we have $B_{n,k}(t)=BE_{n,k}(t)$ and $B_{n,-k}(t)=BE_{n,-k}(t)$. 
		\item
		\label{item:k=2_type_b} When $2 \leq k \leq n $, we have $B_{n,k}(t)=BE_{n,-k}(t)$ and $B_{n,-k}(t)=BE_{n,k}(t)$.
	\end{enumerate}
\end{corollary}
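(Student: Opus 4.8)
The plan is to obtain the corollary by specializing Theorem~\ref{thm: ldes-bexc} to $d=1$, so that $\owrp{n}{2}$ carries exactly two colors. With two colors the symmetric order is the usual order on $\{-n,\dots,n\}$, the prepended zero plays the role of $\pi_0=0$, and $\owrp{n}{2}$ is identified with the signed permutation group $\BB_n$; under this identification $\ldes=\des_B$ and $\bexc=\exc_B$, as recorded in Section~\ref{sec:coloured_perms}. Theorem~\ref{thm: ldes-bexc} then provides a bijection $\Gamma\colon\BB_n\to\BB_n$ with $\des_B(\pi)=\exc_B(\Gamma(\pi))$ whose action on the first letter is exactly to interchange the single unbarred color $1$ with its bar $\ol{1}$ while fixing the underlying letter $i=|\pi_1|$, that is $\Gamma(S_{(i,1)})=S_{(i,\ol{1})}$ and $\Gamma(S_{(i,\ol{1})})=S_{(i,1)}$.

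Next I would record the dictionary between the colored classes $S_{(i,\cdot)}$ and the signed classes $\BB_{n,k}$. For a letter $i$ with $2\le i\le n$ the two colors of $i$ are simply the two signs, so $S_{(i,1)}=\BB_{n,i}$ and $S_{(i,\ol{1})}=\BB_{n,-i}$. Summing $t^{\des_B}$ over $S_{(i,1)}=\BB_{n,i}$ and pushing the sum forward along $\Gamma$ turns it into $\sum t^{\exc_B}$ over $S_{(i,\ol{1})}=\BB_{n,-i}$, which gives $B_{n,i}(t)=BE_{n,-i}(t)$; the reverse inclusion $\Gamma(S_{(i,\ol{1})})=S_{(i,1)}$ gives $B_{n,-i}(t)=BE_{n,i}(t)$. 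This is exactly Part~\ref{item:k=2_type_b}, so for $2\le k\le n$ the corollary follows at once.

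The case $i=1$ is the crux and the main obstacle, because here the naive ``color $=$ sign'' dictionary breaks down. The two letters of absolute value $1$ sit immediately on either side of the prepended zero, and the thresholds defining the two statistics straddle them asymmetrically: at position $0$ a descent is created precisely when $\pi_1<0$, while at position $1$ an excedance is created for every first letter \emph{except} $\pi_1=1$. Consequently the image $\Gamma(\BB_{n,1})$ is not the opposite-sign class---indeed one checks already for $n=2$ that $B_{n,1}(t)\neq BE_{n,-1}(t)$---so the correspondence of colors to signs for the letter $1$ must be read off more carefully. I would pin it down by tracking, for $\pi$ with $|\pi_1|=1$, the single contribution of position $0$ to $\des_B(\pi)$ against the single contribution of position $1$ to $\exc_B(\Gamma(\pi))$; balancing these two bits shows that $\Gamma$ preserves each of the classes $\{\pi_1=1\}$ and $\{\pi_1=-1\}$ rather than swapping them.

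Having identified $\Gamma(\BB_{n,1})=\BB_{n,1}$ and $\Gamma(\BB_{n,-1})=\BB_{n,-1}$, the same push-forward as above yields $B_{n,1}(t)=BE_{n,1}(t)$ and $B_{n,-1}(t)=BE_{n,-1}(t)$, which is Part~\ref{item:k=1_type_b}. Combining the two cases completes the proof. This self-pairing of the first letter of smallest absolute value is the type $B$ analogue of the fixed first letter $j=1$ in Corollary~\ref{thm: des-exc-first-perm}, the difference being that here the reflection on the remaining letters is the sign flip $k\mapsto -k$ coming from the symmetric order rather than $j\mapsto n+2-j$.
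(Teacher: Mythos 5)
Your route is the paper's own route: the paper offers no separate proof of Corollary \ref{thm:equidistribution_for_type_B} beyond the remark that it follows from Theorem \ref{thm: ldes-bexc} with two colors, and your dictionary ($\owrp{n}{2}\cong\BB_n$, symmetric order $=$ usual order on $\{-n,\dots,n\}$, $\ldes=\des_B$, $\bexc=\exc_B$, $S_{(i,1)}=\BB_{n,i}$, $S_{(i,\ol{1})}=\BB_{n,-i}$) together with the swap $\Gamma(S_{(i,j)})=S_{(i,\ol{j})}$ is exactly how Part \ref{item:k=2_type_b} is meant to be read off. You also caught a genuine wrinkle that the paper glosses over: the printed statement of Theorem \ref{thm: ldes-bexc} asserts the swap for \emph{all} $1\le i\le n$, and for $i=1$ that would force $B_{n,1}(t)=BE_{n,-1}(t)$, which is false (already $B_{2,1}(t)=1+t$ versus $BE_{2,-1}(t)=t+t^2$); Part \ref{item:k=1_type_b} is consistent only with $\Gamma$ \emph{preserving} the classes with $|\pi_1|=1$, so the ``further'' clause of that theorem should really treat $i=1$ separately (compare Theorem \ref{thm: criss-cross}, which does exactly that for the min-one order).

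Where your proposal falls short is the justification of that preservation. The ``balancing of bits'' you describe (a permutation starting with $1$ has neither the descent at position $0$ nor an excedance at position $1$, while one starting with $\ol{1}$ always has both) only shows that the crossing identity is impossible for $k=1$, e.g.\ by comparing constant terms; it cannot tell you what $\Gamma$ does to the classes $\BB_{n,\pm 1}$, because the action of a bijection on first letters is not determined by the statistic identity $\ldes(p)=\bexc(\Gamma(p))$. The missing step is read off from the construction of $\Gamma$ inside the paper's proof of Theorem \ref{thm: ldes-bexc}: the color flip $t(i,j)=(i,\ol{j})$ is applied \emph{only} to pairs with $\pi_i\neq i$, and if $p\in S_{(1,j)}$ then the letter $1$ is a fixed point of the underlying permutation, so its pair is not flipped and forms the singleton cycle that the rearrangement places first; hence $\Gamma(p)$ again begins with $(1,j)$, giving $\Gamma(S_{(1,j)})=S_{(1,j)}$ and $\Gamma(S_{(1,\ol{j})})=S_{(1,\ol{j})}$, which is Part \ref{item:k=1_type_b}. (Alternatively, deleting the first letter and relabelling gives bijections from $\BB_{n,1}$ and from $\BB_{n,-1}$ onto $\BB_{n-1}$ that shift both $\des_B$ and $\exc_B$ by $0$ and by $1$ respectively, reducing Part \ref{item:k=1_type_b} to the equidistribution of $\des_B$ and $\exc_B$ over all of $\BB_{n-1}$.) With either of these arguments in place of the heuristic, your proof is complete.
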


A degree $n$ polynomial $f(t) = \sum_{i=0}^n f_i t^i$ with 
$f_n \not= 0$ is said to be palindromic if $f_i = f_{n-i}$
when $0 \leq i \leq n$.  A degree $n$ palindromic polynomial 
$f(t)$ is said to be gamma positive if it can be written as
$f(t) = \sum_{i=0}^{\floor{n/2}} \gamma_it^i(1+t)^{n-2i}$
with $\gamma_i \geq 0$ for all $i$.
In \cite{barycentric-subdivision-petersen-nevo},  
Nevo, Petersen and Tenner  showed gamma positivity of the
polynomial $A_{n,j}(t) + A_{n,n+1-j}(t)$.
\begin{theorem}[Nevo, Petersen and Tenner]
\label{thm:palindromic_gamma_positive}
For a positive integer $n$ and for $1 \leq j \leq n$, 
the sum $A_{n,j}(t)+ A_{n,n+1-j}(t)$ is palindromic and 
gamma positive.
\end{theorem}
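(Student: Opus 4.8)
The plan is to prove the two assertions separately: palindromicity by an elementary sign\nobreakdash-reversing symmetry, and $\gamma$-positivity by reducing it to real-rootedness.

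First I would establish palindromicity through the complement involution $c\colon\SSS_n\to\SSS_n$ given by $(\pi^c)_i=n+1-\pi_i$. This is a bijection carrying $\SSS_{n,j}$ onto $\SSS_{n,n+1-j}$, and since $\pi_i>\pi_{i+1}\iff(\pi^c)_i<(\pi^c)_{i+1}$ it satisfies $\des(\pi^c)=(n-1)-\des(\pi)$. Reading the sum of $t^{\des}$ over $\SSS_{n,j}$ through $c$ yields $A_{n,n+1-j}(t)=t^{\,n-1}A_{n,j}(1/t)$, and applying the same identity with $j$ replaced by $n+1-j$ gives $A_{n,j}(t)=t^{\,n-1}A_{n,n+1-j}(1/t)$. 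Writing $f(t)=A_{n,j}(t)+A_{n,n+1-j}(t)$, these two identities combine to give $t^{\,n-1}f(1/t)=f(t)$; that is, $f$ is palindromic with centre $(n-1)/2$ (its coefficients satisfy $f_i=f_{\,n-1-i}$), which is the first claim.

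For $\gamma$-positivity I would invoke the standard fact that a palindromic polynomial with nonnegative coefficients all of whose roots are real is automatically $\gamma$-nonnegative (see Petersen \cite{petersen-eulerian-nos-book}): the roots, being real, nonpositive and closed under $r\mapsto 1/r$, group into factors $(1+t)$ coming from $r=-1$, a possible power $t^a$ coming from $r=0$, and conjugate pairs contributing $t^2+(s+s^{-1})t+1=(1+t)^2+(s+s^{-1}-2)\,t$ with $s=-r>0$ and nonnegative coefficient; since $\gamma$-nonnegativity is preserved under products and under multiplication by $t^a$ with the corresponding shift of centre, the whole product is $\gamma$-nonnegative. Thus it suffices to prove that $f$ is real-rooted. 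For $j=1$ this is immediate, since $A_{n,1}(t)=A_{n-1}(t)$ and $A_{n,n}(t)=t\,A_{n-1}(t)$, so that $f(t)=(1+t)A_{n-1}(t)$ is real-rooted because the Eulerian polynomial is.

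The main work, and the step I expect to be the genuine obstacle, is real-rootedness of $f$ for general $j$. My plan is an induction on $n$ resting on the first-letter recurrence obtained by deleting the initial letter and standardising the remaining word: if $\pi=j\,\pi_2\cdots\pi_n\in\SSS_{n,j}$ and $\tau\in\SSS_{n-1}$ is the standardisation of $\pi_2\cdots\pi_n$, then $\des(\pi)=\des(\tau)+[\tau_1\le j-1]$ (with $[P]$ equal to $1$ if $P$ holds and $0$ otherwise), which gives
\[
A_{n,j}(t)=t\sum_{i=1}^{j-1}A_{n-1,i}(t)+\sum_{i=j}^{n-1}A_{n-1,i}(t).
\]
I would carry along the inductive hypothesis that the ordered family $\bigl(A_{n-1,i}\bigr)_i$ is mutually interlacing, and show that the operations in this recurrence — forming prefix and suffix partial sums and multiplying one block by $t$ — preserve interlacing, so that each $A_{n,j}$ is real-rooted and the family $\bigl(A_{n,i}\bigr)_i$ is again suitably interlacing; the reciprocal relation $A_{n,n+1-j}(t)=t^{\,n-1}A_{n,j}(1/t)$ would then make $A_{n,j}$ and $A_{n,n+1-j}$ a compatible pair, whence their sum $f$ is real-rooted. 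The delicate point — the true crux — is to pin down an interlacing pattern among the $A_{n,i}$ that is simultaneously strong enough to be inherited through the recurrence and to force compatibility of each reciprocal pair. Should this bookkeeping become unwieldy, a robust combinatorial alternative is to argue $\gamma$-positivity directly by a modified Foata--Strehl (valley-hopping) action; the difficulty there is that the action does not fix the initial letter, so one must hop only within positions $2,\dots,n$, using the first letter as a wall, and then splice the resulting orbits across the complement involution of the first paragraph.
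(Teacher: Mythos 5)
Your first half is fine: the complement involution indeed gives $A_{n,n+1-j}(t)=t^{n-1}A_{n,j}(1/t)$, hence $t^{n-1}f(1/t)=f(t)$, and the first-letter recurrence $A_{n,j}(t)=t\sum_{i<j}A_{n-1,i}(t)+\sum_{i\ge j}A_{n-1,i}(t)$ is correct. The genuine gap is in the second half: you reduce gamma positivity to the claim that $f=A_{n,j}+A_{n,n+1-j}$ is real-rooted, and that claim is never proved — you yourself label the interlacing/compatibility bookkeeping ``the true crux'' and leave it open, with the valley-hopping fallback equally unexecuted. This cannot be waved through as routine. Real-rootedness of each summand (Dey's theorem, which the paper cites) does \emph{not} imply real-rootedness of the sum: by your own reciprocity identity, $f(t)=g(t)+t^{n-1}g(1/t)$ with $g=A_{n,j}$, and a real-rooted polynomial plus its reciprocal can fail to be real-rooted, e.g.\ $g(t)=(1+2t)^2$ gives $g(t)+t^{2}g(1/t)=5+8t+5t^{2}$, which has negative discriminant. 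So the ``compatible pair'' assertion is the entire mathematical content of your plan, not a technicality; moreover real-rootedness of $f$ is strictly stronger than the gamma positivity you were asked to prove (palindromic polynomials with gamma-vector $(1,0,1)$ are gamma positive but not real-rooted). As submitted, your argument establishes palindromicity in general and gamma positivity only for $j\in\{1,n\}$.

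For comparison: the paper does not reprove this cited theorem, but its proof of the type B analogue (Theorem \ref{thm: type-b-gamma}, via Lemma \ref{lem:type-b-rec}) shows the intended elementary route, and the recurrence you already wrote down suffices for it — no real-rootedness needed. For $1\le j\le (n+1)/2$, adding the recurrences for $A_{n,j}$ and $A_{n,n+1-j}$ and pairing the index $i$ with $n-i$ yields
\begin{equation*}
A_{n,j}(t)+A_{n,n+1-j}(t)\;=\;2\sum_{i=1}^{j-1}\bigl(tA_{n-1,i}(t)+A_{n-1,n-i}(t)\bigr)\;+\;(1+t)\sum_{i=j}^{n-j}A_{n-1,i}(t),
\end{equation*}
where the last sum, whose index range is invariant under $i\mapsto n-i$, equals $\tfrac12\sum_{i=j}^{n-j}\bigl(A_{n-1,i}(t)+A_{n-1,n-i}(t)\bigr)$. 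Thus if one proves by joint induction that both $A_{n-1,i}+A_{n-1,n-i}$ (center $(n-2)/2$) and $tA_{n-1,i}+A_{n-1,n-i}$ (center $(n-1)/2$) are gamma positive, the display exhibits $A_{n,j}+A_{n,n+1-j}$ as a nonnegative combination of gamma-positive polynomials with common center $(n-1)/2$, and a companion identity for $tA_{n,j}+A_{n,n+1-j}$ closes the induction — exactly the structure of the paper's type B argument, where $\ol{B}_{n+1,k}$ is written as $(1+t)\sum_i\ol{B}_{n,i}+2\sum_i\widetilde{B}_{n,i}$. Either complete the interlacing scheme you outline (a substantial task), or replace it by this two-family induction; as it stands, your proposal is missing its central step.
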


We first give recurrences for $\B_{n,d,k}$ which denotes the number 
of signed permutations in $\BB_n$ with $d$ descents that start
with the letter $k$.  Though the polynomial $B_{n,k}(t)$  is not 
palindromic on its own, in subsection \ref{subsec:gamma_pos_symm_B_Eul}, we show the 
following type B analogue of Theorem \ref{thm:palindromic_gamma_positive}.
For $1\le k\le n$, define the restricted and symmetrized 
type B Eulerian polynomials,  as 
$$\ol{B}_{n,k}(t)=B_{n,k}(t)+B_{n,\ol{k}}(t) \hspace{2 mm} \mbox{ and }
\hspace{2 mm}
\widetilde{B}_{n,k}(t)=tB_{n,k}(t)+B_{n,\ol{k}}(t).$$


\begin{theorem} \label{thm: type-b-gamma}
    For $1\le k\le n$, both $\ol B_{n,k}(t)$ and $\widetilde B_{n,k}(t)$ are gamma-positive with the centers of symmetry $n/2$ and $(n+1)/2$ respectively. 
\end{theorem}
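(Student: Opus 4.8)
The plan is to get palindromicity for free from a negation symmetry and then prove gamma-positivity by induction on $n$ via a deletion recurrence, handling $\widetilde B_{n,k}$ cleanly and isolating $\ol B_{n,k}$ as the hard case. First I would record the negation symmetry: the entrywise map $\pi\mapsto -\pi$ on $\BB_n$ satisfies $\des_B(-\pi)=n-\des_B(\pi)$ (position $0$ and each interior position flips between a descent and an ascent) and sends $\pi_1=k$ to $\ol k$, so $B_{n,\ol k}(t)=t^n B_{n,k}(1/t)$. Writing $g=B_{n,k}$ and $g^{\ast}(t)=t^{n-1}g(1/t)$, this gives the clean forms $\ol B_{n,k}=g+tg^{\ast}$ and $\widetilde B_{n,k}=t(g+g^{\ast})$, from which palindromicity with centres $n/2$ and $(n+1)/2$ is immediate; only gamma-positivity remains.

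Next I would set up the recurrences promised in the lead-in by deleting the entry of largest absolute value. For $1\le k\le n-1$ the entry $\pm n$ never sits in position $1$, so deleting it, standardizing, and doing the standard descent bookkeeping for reinserting $\pm n$ yields $B_{n,k}(t)=\Psi_n\bigl(B_{n-1,k}(t)\bigr)$, where $\Psi_n(f)=\bigl(1+(2n-3)t\bigr)f+2t(1-t)f'$. The leftover first letter $k=n$ is a base case, for which a direct computation gives $\sigma_{n-1}(B_{n,n})=(1+t)\,\sigma_{n-2}\!\bigl(\sum_{m\ge 1}B_{n-1,m}\bigr)$, reducing it to level $n-1$; here $\sigma_m(h)=h+t^m h(1/t)$ denotes symmetrization.

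For $\widetilde B_{n,k}$ the argument then closes cleanly. Since $\widetilde B_{n,k}=t\,\sigma_{n-1}(B_{n,k})$ it suffices to show $\sigma_{n-1}(B_{n,k})$ is gamma-positive, and a short computation shows $\sigma_{n-1}\circ\Psi_n=\Psi_n\circ\sigma_{n-2}$, so that I only need $\Psi_n$ to preserve gamma-positivity of palindromic polynomials. Writing $N=n-2$, one computes on the gamma basis
\[
\Psi_n\bigl(t^{i}(1+t)^{N-2i}\bigr)=(2i+1)\,t^{i}(1+t)^{N+1-2i}+4(N-2i)\,t^{i+1}(1+t)^{N-1-2i},
\]
whose coefficients $2i+1$ and $4(N-2i)$ are nonnegative for $0\le i\le N/2$. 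Induction on $n$, with the base case above (itself a gamma-positive combination of lower $\widetilde B_{n-1,m}$), then yields gamma-positivity of $\widetilde B_{n,k}$.

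The case $\ol B_{n,k}$ is the main obstacle, since $\ol B_{n,k}=g+tg^{\ast}$ does not factor through the symmetrization of $B_{n-1,k}$ and its correction terms are not individually gamma-positive. Here I would use the first-letter deletion recurrence to prove the identity $B_{n,k}-B_{n,\ol k}=(1-t)\sum_{m\ge k}\ol B_{n-1,m}$, which telescopes to
\[
\ol B_{n,k}(t)=(1+t)B_{n-1}(t)-(1-t)^2\sum_{j\ge 1}\min(j,k-1)\,\ol B_{n-2,j}(t).
\]
Since $(1+t)B_{n-1}$ is gamma-positive and $(1-t)^2\phi$ has gamma coefficients $\mu_i(\phi)-4\mu_{i-1}(\phi)$ for gamma-positive $\phi$, gamma-positivity of $\ol B_{n,k}$ reduces to the domination $\gamma_i(B_{n-1})\ge \mu_i(W_{n,k})$ between the gamma vector of the type $B$ Eulerian polynomial and that of $W_{n,k}=\sum_j\min(j,k-1)\ol B_{n-2,j}$ (the binding case being $k=n$, where it holds with equality in the leading slot, matching $\ol B_{n,n}$ having no constant term). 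Establishing this domination by a secondary induction, fed by the explicit nonnegative gamma-action of $\Psi_n$ above and by gamma-positivity of the $\ol B_{n-2,j}$, is the crux and the step I expect to require the most care.
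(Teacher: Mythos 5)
Your treatment of $\widetilde B_{n,k}(t)$ is sound, and it takes a genuinely different route from the paper: you run the induction through the operator $\Psi_n(f)=\bigl(1+(2n-3)t\bigr)f+2t(1-t)f'$ coming from the polynomial recurrence (Proposition \ref{rec:type-b-rec-poly}, which the paper uses only for real-rootedness), and both your commutation $\sigma_{n-1}\circ\Psi_n=\Psi_n\circ\sigma_{n-2}$ and your gamma-basis computation for $\Psi_n$ check out, as does the identity $B_{n,k}(t)-B_{n,\ol{k}}(t)=(1-t)\sum_{m\ge k}\ol B_{n-1,m}(t)$. The gap is in the other half of the theorem, which you yourself flag as the crux: gamma positivity of $\ol B_{n,k}(t)$. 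You reduce it to the domination $\gamma_i\bigl((1+t)B_{n-1}(t)\bigr)\ge\mu_i(W_{n,k})$ between the gamma vector of the type B Eulerian polynomial and that of $W_{n,k}=\sum_j\min(j,k-1)\ol B_{n-2,j}(t)$, and then leave that inequality to an unspecified ``secondary induction.'' This is not a proof: by writing $\ol B_{n,k}$ as a \emph{difference} of two gamma-positive quantities you have converted the problem into a cancellation estimate that is, if anything, harder than the original statement. Note also that your domination is only a sufficient condition (gamma positivity of $\ol B_{n,k}$ asserts $\gamma_i\bigl((1+t)B_{n-1}\bigr)+4\mu_{i-1}(W_{n,k})\ge\mu_i(W_{n,k})$, which is weaker), so it is an extra unproven claim, and nothing in the proposal indicates what recursion the gamma vectors of $B_{n-1}$ and $W_{n,k}$ jointly satisfy that would drive such an induction.

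The hard case dissolves if you take the \emph{sum} rather than the difference in the very recurrence you are already using. Your identity for $B_{n,k}-B_{n,\ol{k}}$ comes from the first-letter deletion recurrence (Lemma \ref{lem:type-b-rec}); adding the two deletion recurrences instead of subtracting them gives
\[
\ol B_{n,k}(t)=(1+t)\sum_{i=k}^{n-1}\ol B_{n-1,i}(t)+2\sum_{i=1}^{k-1}\widetilde B_{n-1,i}(t),
\qquad
\widetilde B_{n,k}(t)=2t\sum_{i=k}^{n-1}\ol B_{n-1,i}(t)+(1+t)\sum_{i=1}^{k-1}\widetilde B_{n-1,i}(t).
\]
Since, by induction, each $(1+t)\ol B_{n-1,i}(t)$ and each $\widetilde B_{n-1,i}(t)$ is gamma positive with the common center $n/2$ (and $2t\,\ol B_{n-1,i}(t)$, $(1+t)\widetilde B_{n-1,i}(t)$ share the center $(n+1)/2$), both right-hand sides are manifestly nonnegative combinations of gamma-positive polynomials with a fixed center of symmetry --- no subtraction, no domination lemma --- and the two identities close a simultaneous induction on the pair of families, with the boundary case handled by $B_{n,n}(t)=B_{n,\ol{n}}(t)=\sum_{i=1}^{n-1}\widetilde B_{n-1,i}(t)$. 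This is exactly the paper's proof. So you may keep your operator argument for $\widetilde B_{n,k}$ as a self-contained alternative, but the $\ol B_{n,k}$ reduction must be replaced by the additive decomposition; as written, your proposal establishes only half the theorem.
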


A polynomial is said to be real-rooted if all its roots are real.
Dey \cite{dey-archiv} showed that the polynomials $A_{n,j}(t)$ are 
real-rooted when $n \geq 2$ and $1 \leq j \leq n$. Our next result 
(proved in Section \ref{sec:coloured_perms}) is a
type $B$ analogue of this result.

\begin{theorem}
\label{thm:real-rootedness-type-b}
For positive integers $n,k$ with $1 \leq k \leq n$, the polynomials 
$B_{n,k}(t)$ and $B_{n, \ol{k}}(t)$ are real-rooted.
\end{theorem}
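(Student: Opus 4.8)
The plan is to prove the stronger statement that the entire family of polynomials $\{B_{n,k}(t) : k \in [-n,n]\setminus\{0\}\}$, suitably ordered by the value of the first letter, is \emph{mutually interlacing}, and then to invoke the standard fact that every member of a mutually interlacing family of polynomials with nonnegative coefficients is real-rooted. Recall that polynomials $f_1 \preceq f_2 \preceq \cdots \preceq f_m$ with nonnegative coefficients are mutually interlacing when every nonnegative linear combination $\sum_i c_i f_i$ is real-rooted; this strengthening is exactly what lets an induction on $n$ go through, since real-rootedness of a single $B_{n,k}(t)$ does not obviously propagate on its own. First I would record a base case ($n=1$ or $n=2$) directly.

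The engine is the recurrence for $\B_{n,d,k}$ established earlier in this section. Translating it to generating polynomials, each $B_{n,k}(t)$ is a nonnegative combination of the lower-order polynomials $B_{n-1,\ell}(t)$, where $\ell$ ranges over the possible second letters after deleting and standardizing the fixed first letter $k$. The descent between positions $1$ and $2$ contributes a factor $t$ precisely when $\ell$ lies below $k$ in value, while the type $B$ descent at position $0$ (governed by the convention $\pi_0=0$) contributes a global factor $t$ exactly when $k<0$. Schematically,
\begin{equation*}
B_{n,k}(t) = t^{[k<0]}\Bigl( t \sum_{\ell <_{\mathrm{val}} k} B_{n-1,\ell}(t) + \sum_{\ell >_{\mathrm{val}} k} B_{n-1,\ell}(t)\Bigr),
\end{equation*}
the indices on the right being understood after standardization. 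For the inductive step, assume the level-$(n-1)$ family is mutually interlacing. The right-hand side applies to it the classical operation of forming prefix/suffix combinations weighted by $t$, with a threshold that moves monotonically as $k$ increases. The standard interlacing-preservation lemma for such threshold (or ``compatible'') recurrences—of the kind used for the ordinary and $s$-Eulerian polynomials—then yields that the level-$n$ family is again mutually interlacing, whence each $B_{n,k}(t)$, and in particular $B_{n,\ol{k}}(t)$, is real-rooted.

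The main obstacle is the type $B$ sign factor $t^{[k<0]}$, which has no counterpart in the symmetric-group argument of Dey. Because negative first letters carry an extra global factor of $t$, the two halves of the family (indices $k>0$ and $k=\ol{k}<0$) are shifted in degree relative to one another, and one must check that the coupled transformation still sends mutually interlacing families to mutually interlacing families, rather than merely preserving interlacing within each half. I expect this to reduce to verifying that inserting the extra $t$ on the negative block respects the monotone-threshold structure demanded by the interlacing lemma, together with a careful accounting of how standardization re-indexes the signed letters $\ell$ once the first letter $k$ is deleted. Once that compatibility is confirmed, real-rootedness of $B_{n,k}(t)$ and $B_{n,\ol{k}}(t)$ follows for all $n$ and all $1 \le k \le n$.
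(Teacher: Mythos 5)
Your overall strategy (a mutual-interlacing induction across the whole family $\{B_{n,k}(t)\}_k$ driven by a first-letter deletion recurrence) is a legitimate route in principle, but the recurrence you build it on is wrong, and the error is exactly the type $B$ subtlety you flag. Deleting the first letter $\pi_1=k$ and standardizing gives
\begin{equation*}
\des_B(\pi) \;=\; \des_B(\pi') \;+\; [\,0>\pi_1\,] \;+\; [\,\pi_1>\pi_2\,] \;-\; [\,0>\pi_2\,],
\end{equation*}
because the reduced permutation $\pi'$ acquires its \emph{own} descent at position $0$ whenever $\pi_2<0$, and that descent is already counted inside $B_{n-1,\ell}(t)$; your schematic formula $B_{n,k}(t)=t^{[k<0]}\bigl(t\sum_{\ell<k}B_{n-1,\ell}+\sum_{\ell>k}B_{n-1,\ell}\bigr)$ omits the correction term $-[\,0>\pi_2\,]$ and hence double counts. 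Concretely, take $n=2$, $k=1$: the permutations are $1\,2$ (zero descents) and $1\,\ol{2}$ (one descent), so $B_{2,1}(t)=1+t$, whereas your formula gives $B_{1,1}(t)+t\,B_{1,\ol{1}}(t)=1+t^2$. The correct statement is Lemma \ref{lem:type-b-rec} of the paper: for $k>0$ the negative second letters contribute \emph{without} a factor of $t$, and for $k<0$ the block structure is likewise shifted. This matters for your plan, because with the correct recurrence the sets receiving the factor $t$ are $\{0<\ell<k\}$ (for $k>0$) and $\{\ell<k\}\cup\{\ell>0\}$ (for $k<0$); these are prefixes only under the nonobvious ordering $1,2,\dots,n,\ol{n},\ol{n-1},\dots,\ol{1}$ of the lower-level family, so the ``monotone threshold'' verification you defer is precisely where the real work lies, not a routine check. (A smaller point: being closed under nonnegative linear combinations is a consequence of mutual interlacing, not its definition.)

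For contrast, the paper sidesteps cross-index interlacing entirely. It uses Proposition \ref{rec:type-b-rec-poly}, an \emph{index-preserving} recurrence
\begin{equation*}
B_{n,k}(t)=[1+(2n-3)t]\,B_{n-1,k}(t)+2t(1-t)\,DB_{n-1,k}(t),
\end{equation*}
(and its analogue for $B_{n,\ol{k}}$), so each $k$ propagates independently; real-rootedness then follows by induction on $n$ from the fact that $DB_{n-1,k}$ interlaces $B_{n-1,k}$ together with Obreschkoff's theorem (Theorem \ref{thm:real-root-interlacing}), with the boundary case $B_{n,n}(t)=B_{n,\ol{n}}(t)=2^{n-1}tA_{n-1}(t)$ handled by real-rootedness of the Eulerian polynomials. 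If you want to pursue your route, start from Lemma \ref{lem:type-b-rec} and prove the threshold-interlacing lemma for that specific block structure; as written, your argument does not go through.
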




Finally, we give a Carlitz-type identity for the polynomials $B_{n,i}(t)$. 
Our result (proved in Section \ref{sec:proof_thm-X}) is the following. This refines the Carlitz-type identity of Brenti \cite{brenti-q-eulerian-94} for the type B Coxeter group. 
\begin{theorem}
\label{thm: typeb-typea-des-carlitz}
 For positive integers $i, n$ with $i \in [n]$, we have
 \begin{equation}
 \label{eqn:carlitz-pos-i}
  \frac{B_{n,i}(t)}{ (1-t)^{n}} = \sum_{k\ge 1}\bigg( (2k+1)^{n-i}(2k)^{i-1}\bigg)t^{k}. 
\end{equation} 
When $i\in \{\ol{1},\dots, \ol{n}\}$, then
\begin{eqnarray}\label{eqn:carlitz-neg-i}
    \frac{B_{n,i}(t)}{(1-t)^{n}}
= \sum_{k\ge 1}\bigg( (2k-1)^{n-|i|}(2k)^{|i|-1}\bigg)t^{k}. 
\end{eqnarray}
\end{theorem}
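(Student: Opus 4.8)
The plan is to recast the generating-function identity as an equivalent finite counting (Worpitzky-type) statement and to prove the latter with a type B $P$-partition model. Expanding $1/(1-t)^n = \sum_{m \ge 0}\binom{m+n-1}{n-1}t^m$ and writing $B_{n,i}(t) = \sum_d \B_{n,d,i}\,t^d$, comparison of the coefficient of $t^k$ shows that (\ref{eqn:carlitz-pos-i}) is equivalent to
\[
\sum_{d} \B_{n,d,i}\binom{k-d+n-1}{n-1} = (2k+1)^{n-i}(2k)^{i-1} \qquad (k \ge 0),
\]
and (\ref{eqn:carlitz-neg-i}) to the same statement with right-hand side $(2k-1)^{n-|i|}(2k)^{|i|-1}$. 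As both sides are polynomials in $k$, it suffices to prove these equalities for every nonnegative integer $k$, so the task becomes purely enumerative. Note that the exponents on the right sum to $n-1$, matching the binomial $\binom{k-d+n-1}{n-1}$ that counts length-$(n-1)$ monotone fillings; this is the structural hint that fixing the first letter removes one degree of freedom and lowers the power of $(1-t)$ from the classical $n+1$ to $n$.

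For the enumeration I would work with signed words over the centred alphabet $\{-k, \dots, -1, 0, 1, \dots, k\}$ of size $2k+1$, the alphabet governing the classical type B Carlitz identity of Brenti, $B_n(t)/(1-t)^{n+1} = \sum_{k}(2k+1)^n t^k$. The relevant tool is the type B theory of $P$-partitions: to each signed word one associates, by stable sorting, a signed permutation $\sigma \in \BB_n$ (with $\sigma_0 = 0$), and the number of words that sort to a fixed $\sigma$ inside the box of radius $k$ is controlled by $\des_B(\sigma)$. Fixing the first letter $\sigma_1 = i$ then restricts the sorting data: the $i-1$ values lying below $i$ and the $n-i$ values lying above it partition the remaining $n-1$ coordinates into two groups whose admissible ranges have sizes $2k$ and $2k+1$ respectively, and the resulting coordinate-wise independent choices produce exactly $(2k+1)^{n-i}(2k)^{i-1}$. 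When the first letter is negative, the forced descent at position $0$ coming from $\sigma_0 = 0 > \sigma_1$ deletes one further admissible endpoint of the alphabet, turning the size-$(2k+1)$ group into a size-$(2k-1)$ group and giving $(2k-1)^{n-|i|}(2k)^{|i|-1}$.

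The main obstacle is precisely this range bookkeeping: one must verify that fixing $\sigma_1$ makes the constraints decouple coordinate by coordinate into independent choices from alphabets of sizes $2k+1$, $2k$, and $2k-1$, with the split dictated cleanly by the magnitude and sign of $i$. The boundary cases deserve care. For $i=1$ the descent-free identity signed permutation is the unique contributor at $k=0$, which is why the positive sum may be taken from $k=0$ under the convention $0^0=1$ (the term $1^{n-1}0^{0}=1$), whereas in the negative case the naive $k=0$ term is $(-1)^{n-|i|}0^{|i|-1}$ and must be discarded since no negative starting letter admits a descent-free permutation; this explains the stated lower limit $k \ge 1$. The extreme case $|i|=n$ and the interplay with the sign of $\sigma_1$ at position $0$ are the other points where the decoupling must be checked by hand.

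As a self-contained alternative that sidesteps the $P$-partition machinery, I would prove both identities by induction on $n$ using the recurrences for $\B_{n,d,k}$ established earlier in this section. Inserting the largest letter $n$ or $\ol{n}$ into a signed permutation of $\BB_{n-1}$ alters $\des_B$ in a controlled way and either fixes or shifts the first letter, and translating this into the normalized generating function $B_{n,i}(t)/(1-t)^n$ reduces the inductive step to a routine Pascal/binomial identity relating the proposed right-hand sides at consecutive values of $n$. This route still requires the same careful separation into the positive-first-letter, negative-first-letter, and $i=1$ cases, but replaces the combinatorial sorting argument with an algebraic recurrence manipulation.
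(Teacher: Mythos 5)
Your main route is genuinely different from the paper's. The paper never leaves the generating-function world: it converts the insertion recurrence for $\B_{n,d,k}$ into the differential identity of Proposition \ref{prop:rec-satisfied-by-restricted-typeb-anotherform}, namely $B_{n,i}(t)/\bigl(t^{1/2}(1-t)^n\bigr)=\frac{d}{dt}\bigl(2t^{1/2}B_{n-1,i}(t)/(1-t)^{n-1}\bigr)$, and inducts on $n$ from the base case $n=|i|$, where $B_{i,i}(t)=2^{i-1}tA_{i-1}(t)$ feeds into the type A Carlitz identity. Your coefficient-extraction step is correct, the finite Worpitzky-type identities $\sum_d \B_{n,d,i}\binom{k-d+n-1}{n-1}=(2k+1)^{n-i}(2k)^{i-1}$ (and the negative-letter analogue) are true, and your boundary analysis is sharp: you correctly observe that for $i=1$ the sum must include the $k=0$ term with $0^0=1$. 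In fact this pinpoints a slip in the paper itself: as stated, \eqref{eqn:carlitz-pos-i} fails in its constant term when $i=1$, since the left side has constant term $\B_{n,0,1}=1$ while the right side starts at $k\ge 1$; the paper's proof inherits this because its base-case identity $B_{i,i}(t)=2^{i-1}tA_{i-1}(t)$ is valid only for $i\ge 2$, whereas $B_{1,1}(t)=1$, not $t$.

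The gap is in the step you yourself call ``the main obstacle'': the claim that the number of words in $\{-k,\dots,k\}^n$ sorting to a fixed $\sigma\in\BB_n$ is governed by $\des_B(\sigma)$ (i.e.\ equals $\binom{k+n-\des_B(\sigma)}{n}$, with pinned analogue $\binom{k+n-1-\des_B(\sigma)}{n-1}$ when the entry at position $|\sigma_1|$ is forced to $\pm k$) is asserted, not proved, and it is more delicate than the proposal suggests. With the natural conventions (signs of letters match signs of $\sigma$, strict drops exactly at descent positions) the claim is simply false: for $n=2$, $k=1$, both $(1,\ol{2})$ and $(\ol{2},1)$ have one type B descent, yet naive sorting assigns $0$ words to the first and $2$ to the second, instead of $\binom{2}{2}=1$ each. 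Making the count come out right requires a specific tie-breaking rule (among equal absolute values: positives before negatives, positives in increasing index order, negatives in decreasing index order, zeros counted positive), and then the forced strict inequalities sit at positions that are \emph{not} the descent positions: strictness is waived at descents of the form $\sigma_j>0>\sigma_{j+1}$ but imposed at ascents of the form $\sigma_j<0<\sigma_{j+1}$, and only the \emph{number} of forced strict gaps equals $\des_B(\sigma)$, by a telescoping count of sign changes. Once that lemma is established, your decoupling argument (positions with index smaller than $|i|$ admit $2k$ values; positions with larger index admit $2k+1$ values when the first letter is positive and $2k-1$ when it is negative) does deliver exactly the stated right-hand sides. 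But as written the proposal defers precisely the step where all the difficulty lives, and your fallback route --- induction on $n$ via the recurrences for $\B_{n,d,k}$ --- is essentially the paper's own argument, likewise left unexecuted. So the strategy is sound and, in my judgment, completable, but neither of your two routes is a proof yet.
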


\section{Preliminaries}
\label{sec:prelims}
    A colored permutation is an element of the group $\wrp{n}{d} =\ZZ_d \wr \SSS_n$. We represent a colored permutation as the product $\pi\times c$ of a permutation word $\pi=\pi_1\pi_2\dots\pi_n \in \SSS_n$ and an $n$-tuple $c=(c_1,\dots,c_n)$ with each $c_i \in \ZZ_d$.
We sometimes think of an element of $\pi\times c \in \wrp{n}{d}$, where $\pi=\pi_1\dots\pi_n \in \SSS_n$ and $c=(c_1,\dots,c_n)$ with $c_i\in \ZZ_d$, as an $n$-tuple of pairs $((\pi_1,c_1),\dots,(\pi_n,c_n))$. We will also sometimes replace the pair $(\pi_i,c_i)$ by the letter $\pi_i$ with the subscript $c_i$. This will let us write the element $\pi\times c$ as $(\pi_{1})_{c_1}\dots (\pi_{n})_{c_n}$.

Given a linear ordering $>_L$ on the elements of the set $[n] \times [d]_0$. We will define three statistics on the set of colored permutations $\wrp{n}{d}$ using this linear order.
\begin{definition}
    The \emph{$L$-descent set} of a colored permutation $p=\pi\times c$ with respect
    to the linear ordering $>_L$ is the set, $\DescSet_L(p):=\{ i\in [n-1]\ :\ (\pi_i,c_i) >_L (\pi_{i+1},c_{i+1}) \}$. The cardinality of $\DescSet_L(p)$ is denoted by $\ldes(p)$. 
\end{definition}
\begin{definition}
    The \emph{$L$-ascent set} of a colored permutation $p=\pi\times c$ with respect
    to the linear ordering $>_L$ is the set, $\AscSet_L(p):=\{ i\in [n-1]\ :\ (\pi_{i+1},c_{i+1}) >_L (\pi_{i},c_{i}) \}$.
    The cardinality of $\AscSet_L(p)$ is denoted by $\lasc(p)$.
\end{definition}

For a coloured word $w=w_1\dots w_n$, define its reverse word 
as $\rev(w)=w_n\dots w_1$. We note that both the value
along with its color  are reversed by this operation.
It is clear that $\rev:\wrp{n}{d}\mapsto \wrp{n}{d}$ is a bijection. Also, it is easy to check that $\lasc(p)=\ldes(\rev(p))$. 
\begin{definition}
    The \emph{$L$-excedance set} of a colored permutation $p=\pi \times c$ 
    with respect to the linear ordering $>_L$ is the set, $\ExcS_L(p):=\{ i\in [n]\ :\ (\pi_{\pi_i},c_{\pi_i}) >_L (\pi_{i},c_{i}) \}$.  
\end{definition}
The cardinality of $\ExcS_L(p)$ is denoted by $\lexc(p)$.

\begin{definition}
\label{defn:order_l}
    Let $L$ be the following order on $[n] \times [d]_0$
   \begin{equation*}
    1_0<_L \dots<_Ln_0<_L 1_1<_L \dots<_L n_1 <_L \cdots <_L 1_{d-1}<_L\dots<_Ln_{d-1}.
   \end{equation*}
\end{definition}

\begin{example}
\label{ex: running-example}
   Let $p=(241563, 013302)\in \SSS_6^4$.  Since $d=4$, the order $L$ is:
   \begin{equation*}
       1_0<_L \dots<_L 6_0<_L 1_1<_L \dots<_L 6_1<_L1_2<_L\dots<_L 6_2<_L 1_3<_L\dots<_L6_3.
   \end{equation*}
   Here, $5_3>_L6_0$ and therefore, $\ldes(p)=1$. Further, $4_1>_L2_0$, $5_3>_L4_1$, $3_2>_L6_0$, $1_3>_L3_2$ and so, $\lexc(p)=4$. 
\end{example}
The colored permutation $p=(\pi,c)$ written in word notation is the word $(\pi_1,c_1)\dots(\pi_n,c_n)$ or $(\pi_1)_{c_i}\dots (\pi_n)_{c_i}$ in the alphabet $[n] \times [d]_0$.
We introduce our cycle notation for colored permutations as follows. 
Let $p=(\pi,c)$ and $\pi=\cyc_1 \dots \cyc_k$ be the cycle decomposition 
of $\pi$. The cycle decomposition of $p$ is obtained by replacing the each 
element $\pi_i$ in the cycle decomposition of $\pi$ by the pair $(\pi_i,c_i)$ 
or $(\pi_i)_{c_i}$. This will be called the cycle decomposition of 
the colored permutation.

\begin{example}
    Let $p=( 3241, 0110 )\in \wrp{4}{2}$. The cycle decomposition of $p$ is $(1_0,3_1,4_0)(2_1)$ which we think is better notation than the
    equivalent $((1,0),(3,1),(4,0))((2,1))$.
\end{example}

We will look at certain ``nice'' linear orderings on $[n] \times [d]_0$ which will give us the standard linear order on $\mathbb{N}$ when we set $d=1$.
\begin{definition}
\label{defn:mo}
    The \emph{min-one order} $>_{\mo}$ on $[n] \times [d]_0$ is defined to be
    \begin{eqnarray*}
        & & 1_0 <_{\mo} 1_1 <_{\mo} \dots <_{\mo} 1_{d-1} <_{\mo} 2_0 <_{\mo} 3_0 <_{\mo} \dots <_{\mo} n_0 \\
        & & n_0 <_{\mo} 2_1 <_{\mo} \dots <_{\mo} n_1 <_{\mo} \dots <_{\mo} 2_{d-1} <_{\mo} \dots <_{\mo} n_{d-1}
    \end{eqnarray*}
\end{definition}
Note that both the orders defined in Definition \ref{defn:order_l} and 
Definition \ref{defn:mo}
specialise to the standard order on $\mathbb N$ when $d=1$. Furthermore, for these orders, when $d=1$, the statistics $\ldes(\pi)$, $\des(\pi)$, $\exc(\pi)$ and $\lexc(\pi)$ coincide.
 For $(i,j)\in [n]\times [d]_0$, let $S_{(i,j)}$ be set of all colored permutations $\pi\times c \in \wrp{n}{d}$ with $\pi_1=i$ and $c_1=j$. 

\section{Proof of Theorems \ref{thm: ldes-lexc} and \ref{thm: criss-cross}}
\label{sec: proofs}
We can prove the first main result of our work.
\begin{proof}[Proof of Theorem \ref{thm: ldes-lexc}]
 Let $p=(\pi,c) \in \wrp{n}{d}$. Let $p=\cyc_1\dots\cyc_k$ be the cycle decomposition of $p$. We give a bijective proof.
Arrange the cycles such that last element of each cycle is the largest pair $(\pi_i,c_i)$ in the linear order $L$ in that cycle. Furthermore, arrange the cycles such that the last elements are descending in the order $L$. Now, remove the parentheses  
to obtain a colored permutation $\Phi'(p)$ written in word notation. It can be checked that whenever $(\pi_{\pi_i},c_{\pi_i})>_L (\pi_i,c_i)$, an $L$-ascent is produced in $\Phi'(p)$. The map $\Phi=\rev \circ \Phi'$ is such that $\ldes(\Phi(p))=\lexc(p)$.
We need only to show that $\Phi'$ is a bijection. For a colored permutation written in word notation, mark the right to left maxima and make parentheses to obtain the colored permutation in cycle decomposition. This produces the inverse.
\end{proof}
\begin{example}
    Let $p=(241563,013302)$ with the linear order defined in Example \ref{ex: running-example}. The cycle decomposition of $p$ is $(1_32_04_15_36_03_2)$.  In Example 
    \ref{ex: running-example}, we saw that $\lexc(p)=4$. We will arrange its cycle with the largest element in the last position and get $(6_03_21_32_04_15_3)$. We remove the parenthesis and reverse the permutation. We end up with $p'=5_34_12_01_33_26_0$.
    One can check that $\ldes(p')=4$.
\end{example}
We move to the second main result of this work.
Before we see our proof, we need a definition.
Define a map $s:[n]\times [d]_0 \mapsto [n]\times [d]_0$ that takes $s(i,j)=(n+2-i,d+1-j)$ if $i\neq 1$ and when $i=1$, for all $j \in [d]_0$, define $s(1,j)=(1,d+1-j)$. It is easy to see that the map $s$ is a bijection.

\begin{proof}[Proof of Theorem \ref{thm: criss-cross}]
In cycle notation, let $p=(\pi,c) \in \wrp{n}{d}$. Let $p=\cyc_1\dots\cyc_k$ be the cycle decomposition of $p$. First, replace each pair $(i,j)$ by the pair $s(i,j)$. 
Within cycles, we order the cycles of $p$ such that last element of each cycle is the smallest pair $(\pi_i,c_i)$ in the linear order $L$ in that cycle.  Note that this is different from
the order we had in the proof of Theorem \ref{thm: ldes-lexc}.
Further, we order the cycles such that the last elements are ascending in the order $L$. Finally, we remove the parentheses. The colored permutation obtained is the required permutation.
Whenever there is an $i$ such that neither $\pi_i$ nor $\pi_{\pi_i}$ is $1$ and $$(\pi_{\pi_i},c_{\pi_i})>_{\mo}(\pi_i,c_i),$$ then we necessarily have $$(n+2-\pi_{\pi_i},n+1-c_{\pi_i})<_{\mo}(n+2-\pi_i,n+1-c_i).$$
For these indices, an $L$-excedance gets turned into an $L$-descent.
If $\pi_i=1$, then 
$$(\pi_{1},c_1)>_{\mo} (1,c_i)$$ is always true. Here, $(1,c_i)$ is the last element in the first cycle and $(\pi_{1},c_1)$ is the first element of that cycle according to our arrangement. This does not generate an $L$-descent under our bijection. 
However, we have the inequality 
$$(\pi^{-1}(1),c_{\pi^{-1}(1)})>_{\mo} (1,c_i)$$
Under our bijection, this produces an $L$-descent as $(n+2-\pi^{-1}(1),n+1-c_{\pi^{-1}(1)})>_{\mo} (1,n+1-c_i)$. Therefore, every $L$-excedance of $p$ is taken to an $L$-descent of $\Gamma(p)$, which was to be shown.
What is left to be shown is that there are no $L$-descents occurring between cycles when parentheses are being removed. This is taken care by the fact that the last element in the cycle is the always smaller in the `$\mo$' order than the first element of the succeeding cycle, by our arrangement of the cycles.
\end{proof}

\begin{example}
An illuminating example is when the number of colors is one and the order is just 
the order of $\mathbb Z$. Take a permutation written in one line notation, say $\pi=891624375$ with $\exc(\pi)=3$. Replace $i$ by $n+2-i$ for $i\neq 1$. We get $\pi'=321597846$. Write this in cycle form as specified, that is, $\pi'=(31)(2)(596784)$. Remove the parenthesis to get $\Phi(\pi)=312596784$ which has $\des(\Phi(\pi))=3$.  
\end{example}

Recall the following definitions:
 $\Ade_{n,d,k}  = \{ \pi \in \SSS_n : \des(\pi)=d, \pi(1)=k \}$   and
 	$\AEx_{n,d,k}  =  \{ \pi \in \SSS_n : \exc(\pi)=d, \pi(1)=k \}.$
\begin{proof} (Of Corollary \ref{thm: des-exc-first-perm})
When $d=1$, for all $\pi \in \SSS_n$, we clearly have $\ldes(\pi) = \des(\pi)$
and $\lexc(\pi) = \exc(\pi)$.
\end{proof}

\section{Colored permutations with an even number of colors}
\label{sec:coloured_perms}
Let $\BB_n$ be the set of permutations $\pi$ of $[\pm n]$
satisfying $\pi(-i) = -\pi(i)$.  We denote $-k$ alternatively as $\ol{k}$.
$\BB_n$ is referred to as the hyperoctahedral group or the group of 
signed permutations on $[\pm n]$. 
For $\pi \in \BB_n$ we alternatively denote $\pi(i)$ as $\pi_i$. 
For $\pi \in \BB_n$, let  $\Negs(\pi) = \{i : i > 0, 
\pi_i < 0 \}$ be the set of elements of $\pi$ which occur with a 
negative sign.  For $k \in [-n,n]$, let $\BB_{n,k}= \{ \pi \in \BB_n: \pi_1=k \}$. 
For $\pi \in \BB_n $, let $ \pi_0=0$.  As defined in Petersen's book, 
we give the following definition of type B descents:
$\des_B(\pi)= |\{ i \in [0,1,2\ldots ,n-1]: \pi_i > \pi_{i+1}\}| $ and 
let $\asc_B(\pi)= n - \des_B(\pi)$.

An immediate point to note is that the maximum number of type B descents for a signed permutation in the hyperoctahedral group is $n$ as opposed to $n-1$ for  elements of the symmetric group. This additional descent is caused by the introduction of the element $\pi_0=0$. However, in the group $\wrp{n}{2}$, the maximum number of $\ldes$ or $\lexc$ is $n-1$.

\begin{definition}
Define $I:=[-d,d]\backslash \{0\}$.  Define the set $\ol{\wrp{n}{d}}$ to be the set of colored permutations in $\wrp{n}{d}$ with the pair $(0,0)$ or $0_0$ (which we will write as just $0$), prefixed to each colored permutation.
\end{definition}

We introduce a linear order on the set $\{(0,0)\}\cup[n]\times I$.
We define a linear order that generalises the standard linear order of $\mathbb{Z}$ when we set $d=2$. 
\begin{definition}
    The symmetric order $>_{\sm}$ is the linear order on $\{(0,0)\}\cup[n]\times I$ defined as follows:
    \begin{equation*}
        n_{\ol{d}}<_{\sm} \dots<_{\sm} n_{\ol{1}}<_{\sm} (n-1)_{\ol{d}}<_{\sm}\dots<_{\sm}(n-1)_{\ol{1}}<_{\sm}\dots <_{\sm}1_{\ol{d}}<_{\sm}\dots<_{\sm}1_{\ol{1}}<_{\sm}0
    \end{equation*}
      \begin{equation*}
        0<_{\sm}1_1 <_{\sm} \dots <_{\sm} 1_d <_{\sm} 2_1 <_{\sm} \dots <_{\sm} 2_d  <_{\sm} \dots <_{\sm} n_{1} <_{\sm} \dots <_{\sm} n_{d}
    \end{equation*}
\end{definition}
The $L$-descent and $L$-ascent of a colored permutation in $\owrp{n}{d}$ are defined in the same way as for $\wrp{n}{d}$.
We define a new statistic on the set $\owrp{n}{2d}$.

\begin{definition}
    A $B$-excedance of a colored permutation $p=(\pi,c)\in \owrp{n}{2d}$, denoted by $\bexc(p)$, is the cardinality of the set $\{ i\in [n]: (\pi_{\pi_i},c_{\pi})>_{\sm} (\pi_i,c_i)\}\cup\{i\in[n]:\pi_i=i \mbox{ and } c_i < 0\}$.  
\end{definition}
Define a map from  $t: [n]\times I \mapsto [n]\times I$ that sends $(i,j)$ to $(i,\ol j)$ and $(i,\ol j)$ to $(i,j)$. With this, we are ready to prove Theorem \ref{thm: ldes-bexc}.
\begin{proof}[Proof of Theorem \ref{thm: ldes-bexc}]
  Each colored permutation $p$ in $\owrp{n}{2d}$ has the form $0,p'$ where $p'\in \wrp{n}{2d}$.  Consider the cycle decomposition of $p'$. For every pair in the colored permutation such that $\pi_i\neq i$, change the pair $(i,j)$ to the pair $t(i,j)$. Arrange the pairs in each cycle such that the last pair has the smallest first component in that cycle. Arrange the cycles such that first component of the last elements are increasing. Remove the parentheses to get a colored permutation in word form. This is a bijection as all the steps are clearly reversible.
  We claim that this is the required colored permutation by arguing that $\ldes(\Gamma(p))=\bexc(p)$.
  
  Inside each cycle, assuming $\pi_i$ is not the first component of the last element of the cycle, if $(\pi_{\pi_i},c_{\pi_i})>_{\sm} (\pi_i,c_i)$, then $t(\pi_i,c_i) >_{\sm} t(\pi_{\pi_i},c_{\pi_i})$ and this turns into an $L$-descent when we remove the parentheses. If $\pi_i$ is the first component of the last element and suppose we had $(\pi_{\pi_i},c_{\pi_i}) >_{\sm} (\pi_i,c_i)$, then since $\pi_{\pi_i}>\pi_i$ by construction, either $c_{\pi_i}$ is strictly positive. Therefore, $t(\pi_{\pi_i},c_{\pi_i})$ is negative in the second component. Since this is the first element of a block and the last element of the previous block is smaller in the first component, this will create an $L$-descent between this block and the previous block.  
  
  Similarly, for an instance where $\pi_i=i$ and $c_i<0$, by the way we have arranged the cycles, there will be an $L$-descent between blocks as the last element of the previous cycle will be smaller in the first component. Therefore, the number of $B$-excedances in $p$ is the number of $L$-descents in $\Gamma(p)$.
\end{proof}



\subsection{Real-rootedness of the type B restricted Eulerian polynomials} 
 
 We at first give recurrences satisfied by the numbers 
 $\B_{n,d,k}$ and $\BE_{n,d,k}$. 
 
 \begin{theorem}
 \label{thm:recurrence_descent_for_type_b}
 For positive integers $n,k,d$ with  $0 \leq d \leq n$ and $1 \leq k \leq n-1$, 
 \begin{eqnarray}
 \label{eqn:recurrence_descent_for_type_b}
B_{n,d,k} & = & (2d+1)B_{n-1,d,k} + (2(n-d-1)+1)B_{n-1,d-1,k}, \\
  \label{eqn:recurrence_descentnegative_for_type_b}
  B_{n,d,\ol{k}} & = & (2d-1)B_{n-1,d,\ol{k}} + (2(n-d)+1)B_{n-1,d-1,\ol{k}} .
 \end{eqnarray}
 where we have the initial conditions 
 $B_{0,0,0}=1, B_{1,0,1}=1, B_{1,1,1}=0, B_{1,1,\ol{1}}=1$ 
 and  $B_{1,0,\ol{1}}=0$. Moreover, we have $B_{n,d,n}=
 B_{n,d,\ol{n}}= 2^{n-1}A_{n,d-1}$ when 
 $1 \leq d \leq n$ and $B_{n,0,n}=B_{n,0, \ol{n}}=0$. 
 \end{theorem}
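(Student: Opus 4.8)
The plan is to prove both recurrences by inserting the letter of largest absolute value, in the spirit of the classical insertion proof of the Eulerian recurrence. Fix $n$ and a first letter $k$ with $1\le |k|\le n-1$, and consider deleting the entry $\pm n$ from a signed permutation $\pi\in\BB_n$; since $\pi_1=k\ne\pm n$, the deleted entry lies in one of the $n-1$ slots strictly after the first position, and what remains is a genuine element $\sigma\in\BB_{n-1}$ (its entries have absolute values $\{1,\dots,n-1\}$, signs intact) with the same first letter. Conversely each such $\pi$ arises uniquely by inserting either $n$ or $\ol n$ into one of these $n-1$ slots of some $\sigma$, giving a $2(n-1)$-to-$1$ correspondence. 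First I would record how $\des_B$ changes under each insertion, keeping the prepended entry $\pi_0=0$ in mind: $n$ is the global maximum and $\ol n$ the global minimum, so at an internal slot either one leaves the descent count unchanged if the split pair was a descent and raises it by one if the split pair was an ascent, whereas at the terminal slot inserting $n$ appends an ascent (no change) while inserting $\ol n$ appends a descent (raise by one).

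The key bookkeeping step is to count, inside $\sigma$, how many of the internal slots are descent slots versus ascent slots, and here the \emph{sign of the first letter} is decisive. When $k>0$ the position-$0$ comparison $0<k$ is an ascent, so all $e=\des_B(\sigma)$ descents of $\sigma$ sit among its $n-2$ internal slots; tallying the four insertion types (maximum or minimum, into a descent or ascent slot) together with the terminal slot yields $2e+1$ insertions that preserve the descent count and the complementary number that raise it, which at $e=d$ and $e=d-1$ give exactly the coefficients $2d+1$ and $2(n-d-1)+1$ of the first recurrence. When the first letter is $\ol k<0$ the comparison $0>\ol k$ is itself a descent, so one fewer internal slot is a descent slot; the identical tally now produces $2e-1$ count-preserving insertions and, at $e=d-1$, exactly $2(n-d)+1$ count-raising ones, which is the second recurrence. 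I expect this sign-dependent slot count, combined with the asymmetry of the terminal slot between $n$ and $\ol n$, to be the only delicate point; everything else is a routine verification.

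The initial conditions for $n\le 1$ are immediate by listing $\BB_0$ and $\BB_1=\{1,\ol1\}$ and computing $\des_B$ directly. The boundary cases $k=n$ and $k=\ol n$ are excluded from the recurrences (then $k$ would not be a legal first letter of $\BB_{n-1}$) and require a separate argument. If $\pi_1=n$ then position $0$ is an ascent ($0<n$) and position $1$ is forced to be a descent, while if $\pi_1=\ol n$ the forced descent is instead at position $0$; in both cases $\des_B(\pi)=1+\des(w)$, where $w=\pi_2\cdots\pi_n$ ranges over all signed words on the values $\{1,\dots,n-1\}$ and $\des(w)$ counts its internal descents. This already yields $B_{n,0,n}=B_{n,0,\ol n}=0$ and $B_{n,d,n}=B_{n,d,\ol n}$, reducing the claim to showing that the number of length-$(n-1)$ signed words with exactly $d-1$ internal descents equals $2^{n-1}$ times the number of permutations in $\SSS_{n-1}$ with $d-1$ descents.

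I would prove this last identity by the same insertion idea applied to signed words: inserting $\pm m$ into a signed word of length $m-1$ splits into $m$ slots with two sign choices, and comparing the resulting transitions with the ordinary Eulerian recurrence shows that each term is exactly doubled. Hence, writing $f_m(t)$ for the descent generating polynomial over length-$m$ signed words, one gets $f_m(t)=2\,T\!\left[f_{m-1}\right](t)$ where $T$ is the (linear) operator implementing one step of the Eulerian recurrence, so that $f_m(t)=2^m A_m(t)$ by induction on $m$ with base $f_1(t)=2=2A_1(t)$. Extracting the coefficient of $t^{d-1}$ from $f_{n-1}(t)$ then gives $B_{n,d,n}=B_{n,d,\ol n}=2^{n-1}A_{n,d-1}$, where $A_{n,d-1}$ denotes the number of permutations of $\SSS_{n-1}$ with $d-1$ descents, completing the proof.
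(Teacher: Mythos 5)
Your proposal is correct and follows essentially the same route as the paper: both prove the recurrences by inserting $n$ or $\ol{n}$ into the $n-1$ slots after the fixed first letter of an element of $\BB_{n-1}$ and tallying which insertions preserve or raise the descent count, and both handle $k=\pm n$ by dropping the first letter and reducing to Eulerian numbers. If anything, you are more explicit than the paper in two spots it treats in one line: the dependence of the descent-slot count on the sign of $\pi_1$ (the position-$0$ comparison), and the verification that signed words of length $n-1$ with $d-1$ descents are counted by $2^{n-1}$ times the corresponding Eulerian number, which you justify via the doubled Eulerian recurrence.
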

 
 \begin{proof}
 We consider \eqref{eqn:recurrence_descent_for_type_b} 
 first. 
 Let $\pi \in \BB_{n-1}$ begins with  $k$ and let $\pi$ has $d$ descents. We observe 
 the effect of inserting $n$ or $\ol{n}$ at different positions of $\pi$ other than the first position. 
 We can insert $n$ or $\ol{n}$ at a descent position or insert $n$ at the end to get an element  $\sigma \in \BB_{n}$ which begins with  $k$ and has $d$ descents. 
 
 Alternatively, one could start with an element $\pi$ of $\BB_{n-1}$ that begins with $k$ and has $d-1$ descents. 
 Here we can insert $n$ or $\ol{n}$ at an ascent position and we can insert $\ol{n}$ in the last position to get an element $\sigma$ of $\BB_n$ which begins with $k$ and has $d$ descents. 
 Thus, \eqref{eqn:recurrence_descent_for_type_b}  
 follows. 
 
 In an identical manner,
 we can prove 
 \eqref{eqn:recurrence_descentnegative_for_type_b}. Lastly, if $\pi_1 = n$ or $\pi_1= \ol{n}$, 
 then we can drop the letter $n$ and treat $\pi_2, \dots, \pi_n$ as a permutation in $\SSS_{n-1}$, yielding $B_{n,d,n}= B_{n,d,\ol{n}}= 2^{n-1}A_{n,d-1}$ for $1 \leq d \leq n$ and $B_{n,0,n}=B_{n,0, \ol{n}}=0$. 
 The proof is complete. 
 \end{proof}

 The following result follows from Theorem \ref{thm:recurrence_descent_for_type_b} and Corollary \ref{thm:equidistribution_for_type_B}. 
 
  \begin{corollary}
        	\label{thm:recurrence_excedance_for_type_b}
  	For positive integers $n,k,j$ with  $0 \leq d \leq n$ and $2 \leq k \leq n-1$, 
  	\begin{eqnarray}
  	\label{eqn:recurrence_excedance_for_type_b}
  	BE_{n,d,k} & = & (2d-1)BE_{n-1,d,k} + (2(n-d)+1)BE_{n-1,d-1,k} , \\
  	\label{eqn:recurrence_excedance_for_type_b-2}
  	BE_{n,d,-k} & = & (2d+1)BE_{n-1,d,-k} + (2(n-d-1)+1)BE_{n-1,d-1,-k}. 
  	\end{eqnarray}
    where we have the initial conditions 
    $BE_{0,0,0}=1, BE_{1,0,1}=1, BE_{1,1,1}=0, 
    BE_{1,1,-1}=1$ and  $BE_{1,0,-1}=0$.
  \end{corollary}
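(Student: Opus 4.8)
The plan is to obtain the excedance recurrences by transporting the descent recurrences of Theorem \ref{thm:recurrence_descent_for_type_b} through the equidistribution of Corollary \ref{thm:equidistribution_for_type_B}, exactly as the preamble to the statement suggests. First I would record what that equidistribution says coefficientwise. Since $B_{n,k}(t)$ and $BE_{n,k}(t)$ are the generating polynomials whose coefficient of $t^d$ are precisely $B_{n,d,k}$ and $BE_{n,d,k}$, the polynomial identities $B_{n,k}(t)=BE_{n,-k}(t)$ and $B_{n,-k}(t)=BE_{n,k}(t)$ from part (2) of Corollary \ref{thm:equidistribution_for_type_B} (valid for $2 \le k \le n$) become the coefficientwise equalities $B_{n,d,k}=BE_{n,d,-k}$ and $B_{n,d,-k}=BE_{n,d,k}$ for every $d$. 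Part (1) read the same way gives $B_{n,d,1}=BE_{n,d,1}$ and $B_{n,d,-1}=BE_{n,d,-1}$.

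With these dictionaries in hand, proving \eqref{eqn:recurrence_excedance_for_type_b} is a direct substitution. For $2 \le k \le n-1$ I would start from $BE_{n,d,k}=B_{n,d,\ol{k}}$, apply the descent recurrence \eqref{eqn:recurrence_descentnegative_for_type_b} (whose hypothesis $1 \le k \le n-1$ is met), and then rewrite the terms $B_{n-1,d,\ol{k}}$ and $B_{n-1,d-1,\ol{k}}$ back as $BE_{n-1,d,k}$ and $BE_{n-1,d-1,k}$ using the equidistribution one level down. This yields
\[
BE_{n,d,k}=(2d-1)BE_{n-1,d,k}+(2(n-d)+1)BE_{n-1,d-1,k},
\]
which is exactly \eqref{eqn:recurrence_excedance_for_type_b}. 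The companion \eqref{eqn:recurrence_excedance_for_type_b-2} follows symmetrically: start from $BE_{n,d,-k}=B_{n,d,k}$, expand with \eqref{eqn:recurrence_descent_for_type_b}, and convert $B_{n-1,d,k}$, $B_{n-1,d-1,k}$ into $BE_{n-1,d,-k}$, $BE_{n-1,d-1,-k}$.

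For the initial conditions I would read them off the small cases of the equidistribution. Part (1) of Corollary \ref{thm:equidistribution_for_type_B} at $n=1$ identifies $BE_{1,d,\pm 1}$ with $B_{1,d,\pm 1}$, so the stated values $BE_{1,0,1}=1$, $BE_{1,1,1}=0$, $BE_{1,1,-1}=1$, $BE_{1,0,-1}=0$ come directly from the descent initial conditions, while $BE_{0,0,0}=1$ is the empty-permutation convention.

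The only real obstacle is bookkeeping rather than genuine difficulty. One must invoke the equidistribution at both level $n$ and level $n-1$; this is why the hypothesis is restricted to $2 \le k \le n-1$, so that part (2) of Corollary \ref{thm:equidistribution_for_type_B} (which needs $k \ge 2$) remains available at level $n-1$ while the descent recurrence (which needs the starting letter in $1 \le k \le n-1$) stays valid. The point to watch is that the equidistribution flips the sign of the starting letter: the positive-$k$ excedance recurrence therefore inherits its coefficients $(2d-1)$ and $(2(n-d)+1)$ from the \emph{negative} descent recurrence \eqref{eqn:recurrence_descentnegative_for_type_b}, whereas the negative-$k$ excedance recurrence inherits $(2d+1)$ and $(2(n-d-1)+1)$ from the \emph{positive} one \eqref{eqn:recurrence_descent_for_type_b}. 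Keeping this cross-correspondence straight is the only place an error could creep in.
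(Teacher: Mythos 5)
Your proposal is correct and follows exactly the route the paper intends: the paper gives no separate argument for this corollary beyond the remark that it ``follows from Theorem \ref{thm:recurrence_descent_for_type_b} and Corollary \ref{thm:equidistribution_for_type_B},'' and your write-up simply makes that transport explicit, including the key cross-correspondence that the positive-letter excedance recurrence inherits its coefficients from the negative-letter descent recurrence \eqref{eqn:recurrence_descentnegative_for_type_b} and vice versa. Your observation that the hypothesis $2 \le k \le n-1$ is exactly what is needed to invoke the equidistribution at both levels $n$ and $n-1$ together with the descent recurrence is also the correct accounting.
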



Let $\BB_{n,k}:=\{ \pi \in \BB_n : \pi(1)=k \}$. 
Define the restricted type B Eulerian polynomials, for $k\in [-n,n]$ to be 
$\displaystyle B_{n,k}(t)=\sum_{\pi \in \BB_{n,k}} t^{\des_B(\pi)}.$
Using Theorem \ref{thm:recurrence_descent_for_type_b}, 
we next prove 
recurrences for the polynomials $B_{n,k}(t)$.
Let $D$ be the operator $\displaystyle \frac{d}{dt}$. 

\begin{proposition}
\label{rec:type-b-rec-poly}
For positive integers $n,k$ with $1 \leq k \leq n-1$, the polynomials 
$B_{n,k}(t) $ and $B_{n,\ol{k}}(t)$ satisfy the following recurrences:
\begin{eqnarray}
\label{eqn:poly-rec-type-b}
B_{n,k}(t) & = & [1+(2n-3)t] B_{n-1, k} (t) + 2t(1-t) D B_{n-1,k} (t), \\
\label{eqn:poly-rec-type-b-neg}
B_{n, \ol{k}}(t) & = & [(2n-1)t-1] B_{n-1, \ol{k} } (t) + 2t(1-t) D B_{n-1, \ol{k} } (t)  
\end{eqnarray}
where $B_{0,0}(t)=1, B_{1,1}(t)=1, B_{1, \ol{1}}(t)=t$. 
Moreover, for $n \geq 2$, we have $B_{n,n}(t)= B_{n, \ol{n}}(t)= 2^{n-1}tA_{n-1}(t).$
\end{proposition}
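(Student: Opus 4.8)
The plan is to convert the integer recurrences of Theorem~\ref{thm:recurrence_descent_for_type_b} into the stated differential-polynomial recurrences by the standard generating-function technique. Writing $B_{n,k}(t)=\sum_{d\ge 0}B_{n,d,k}\,t^{d}$ and adopting the convention that $B_{m,d,\ell}=0$ whenever $d<0$ or $d>m$, I would multiply \eqref{eqn:recurrence_descent_for_type_b} by $t^{d}$ and sum over all $d$. The only tool needed is the operator identity $t\,Df(t)=\sum_{d}d\,a_d\,t^{d}$ for $f(t)=\sum_d a_d t^d$, which converts coefficients that are linear in $d$ into applications of $D=\frac{d}{dt}$. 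The boundary indices $d=0$ and $d=n$ require no separate treatment, since the vanishing convention makes all out-of-range coefficients disappear automatically.

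Concretely, for \eqref{eqn:poly-rec-type-b} the term $\sum_d(2d+1)B_{n-1,d,k}t^d$ becomes $2t\,DB_{n-1,k}(t)+B_{n-1,k}(t)$. For the shifted term I would reindex by $e=d-1$; the coefficient $2(n-d-1)+1$ then reads $2(n-e-2)+1=(2n-3)-2e$, so that $\sum_d(2(n-d-1)+1)B_{n-1,d-1,k}t^d=t\big[(2n-3)B_{n-1,k}(t)-2t\,DB_{n-1,k}(t)\big]$. Adding the two pieces and collecting the $DB_{n-1,k}$ contributions gives exactly $[1+(2n-3)t]B_{n-1,k}(t)+2t(1-t)DB_{n-1,k}(t)$. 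The negative case \eqref{eqn:poly-rec-type-b-neg} is identical in structure: here $\sum_d(2d-1)B_{n-1,d,\ol{k}}t^d=2t\,DB_{n-1,\ol{k}}(t)-B_{n-1,\ol{k}}(t)$, while the shifted coefficient $2(n-d)+1$ becomes $(2n-1)-2e$ after reindexing, producing $t\big[(2n-1)B_{n-1,\ol{k}}(t)-2t\,DB_{n-1,\ol{k}}(t)\big]$; summing yields $[(2n-1)t-1]B_{n-1,\ol{k}}(t)+2t(1-t)DB_{n-1,\ol{k}}(t)$.

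For the initial data I would read the values directly from the definition of $\des_B$: $\BB_0$ is trivial, so $B_{0,0}(t)=1$; for $\BB_1$ with the convention $\pi_0=0$, the element $\pi_1=1$ satisfies $\pi_0<\pi_1$ and hence has $\des_B=0$, giving $B_{1,1}(t)=1$, whereas $\pi_1=\ol{1}$ satisfies $\pi_0>\pi_1$ and so has a single descent, giving $B_{1,\ol{1}}(t)=t$. Finally, for the ``Moreover'' claim I would sum the closed-form evaluation of Theorem~\ref{thm:recurrence_descent_for_type_b} over $d$: the nonzero contributions occur for $1\le d\le n$, and after factoring out $2^{n-1}t$ the remaining sum is precisely the descent generating polynomial $A_{n-1}(t)$ of $\SSS_{n-1}$, so $B_{n,n}(t)=B_{n,\ol{n}}(t)=2^{n-1}tA_{n-1}(t)$ for $n\ge2$. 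No step presents a genuine obstacle; the only place demanding care is the reindexing of the shifted term, where the coefficient must be re-expanded in the new index \emph{before} the operator identity is applied, so that the two $2t^{2}\,DB$ contributions combine correctly into the single factor $2t(1-t)$.
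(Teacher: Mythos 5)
Your proposal is correct and follows essentially the same route as the paper's proof: multiply the integer recurrence of Theorem~\ref{thm:recurrence_descent_for_type_b} by $t^d$, sum over $d$, and convert the $d$-weighted sums into $2t\,D$ terms, with the shifted term reindexed so the two $2t^2 D$ contributions combine into $2t(1-t)D$. The only (immaterial) differences are that you work out the $B_{n,\ol{k}}$ case and the initial conditions explicitly where the paper declares them identical or omits them, and for the ``Moreover'' claim you sum the closed form $B_{n,d,n}=B_{n,d,\ol{n}}=2^{n-1}A_{n-1,d-1}$ from Theorem~\ref{thm:recurrence_descent_for_type_b} rather than repeating the paper's drop-the-letter-$n$ argument, which is the same fact in a different package.
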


\begin{proof}
We prove \eqref{eqn:poly-rec-type-b} first. Using \eqref{eqn:recurrence_descent_for_type_b}, we have 
\begin{eqnarray*}
    B_{n,k}(t) & = & \sum_{d=0}^{n} B_{n,d,k} t^d 
    =  \sum_{d=0}^n \big( (2d+1)B_{n-1,d,k} + (2(n-d-1)+1)B_{n-1,d-1,k} \big) t^d \\
    & = & \sum_{d=0}^n B_{n-1,d,k} t^d + 2t \sum_{d=0}^n dB_{n-1,d,k}t^{d-1} + (2n-3) t \sum_{d=0}^n B_{n-1,d-1,k} t^{d-1} \\
   & & - 2t^2 \sum_{d=0}^n (d-1) B_{n-1, d-1, k} t^{d-2} \\ 
    & = & [1+(2n-3)t] B_{n-1, k} (t) + 2t(1-t) D B_{n-1,k} (t)
\end{eqnarray*}
This completes the proof of \eqref{eqn:poly-rec-type-b}. We can prove \eqref{eqn:poly-rec-type-b-neg} 
in an identical way and hence we omit this. 
Finally, if $\pi_1 = n$, then we can drop the letter $n$ and treat $\pi_2, \dots, \pi_n$ as a permutation in $\SSS_{n-1}$ and this gives that 
$B_{n,n}(t)=  2^{n-1}tA_{n-1}(t).$ 
The proof is complete. 
\end{proof}

\begin{definition}
Let $f$ be a real-rooted polynomial with real roots $\alpha_1 \geq \alpha_2 \geq \dots  \geq \alpha_{\deg(f)} $ and  $g$ be a real-rooted polynomial with real roots $\beta_1 \geq \beta_2 \geq \dots \geq \beta_{\deg(g)} $. 
We say that 
$f$ interlaces $g$ if 
\[ \dots \alpha_2 \leq \beta_2 \leq \alpha_1 \leq \beta_1 .\] 
Note that in this case we must have $\deg(f) \leq \deg(g) \leq \deg(f)+1.$ If $f$ interlaces $g$ or $g$ interlaces $f$, 
then we also say that 
$f$ and $g$ have interlacing roots. 
\end{definition}

We need the following result of Obreschkoff \cite{obreschkoff}. This can also 
be found in \cite[Theorem 4.3]{hyatt-recurrences_eulerian_typeBD}. 

\begin{theorem}[Obreschkoff] 
\label{thm:real-root-interlacing}
Let $f,g \in \mathbb{R}[t]$ with $\deg(f) \leq \deg(g) \leq \deg(f)+1$. Then, $f$ interlaces $g$ 
if and only if 
$c_1f+c_2g$ has only real-roots
for all $c_1, c_2 \in \mathbb{R}$. 
\end{theorem}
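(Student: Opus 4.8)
The statement is the classical theorem of Obreschkoff, and the plan is to prove both implications after a preliminary reduction to the nondegenerate case. Write $m=\deg(g)$ and $\ell=\deg(f)\in\{m-1,m\}$, and list the roots of $g$ as $\beta_1\ge\cdots\ge\beta_m$ and those of $f$ as $\alpha_1 \ge \cdots \ge \alpha_\ell$. Since the set of real-rooted polynomials of degree at most $m$ is closed in the coefficient topology (non-real roots occur in conjugate pairs and survive small perturbations), and since weak interlacing is a limit of strict interlacing, it suffices to establish the equivalence when $f$ and $g$ have simple real roots with no common root and strict interlacing $\beta_{i+1}<\alpha_i<\beta_i$; the general case then follows by perturbing the roots and passing to the limit on both sides of the equivalence.

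For the forward implication, I would assume $f$ interlaces $g$ and fix $h=c_1f+c_2g$ with $c_1\ne 0$ (the case $c_1=0$ being trivial). Evaluating at a root of $g$ gives $h(\beta_i)=c_1f(\beta_i)$, and because exactly one root of $f$ lies strictly between $\beta_{i+1}$ and $\beta_i$, the values $f(\beta_1),\dots,f(\beta_m)$ strictly alternate in sign. Hence $h(\beta_1),\dots,h(\beta_m)$ alternate in sign, and the intermediate value theorem yields a root of $h$ in each of the $m-1$ open intervals $(\beta_{i+1},\beta_i)$. Thus $h$ has at least $m-1\ge\deg(h)-1$ real roots; since any remaining roots of the real polynomial $h$ would have to occur in conjugate pairs, the single possible leftover root is forced to be real as well, so $h$ is real-rooted.

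For the converse I would pass to the rational function $R=f/g$, which has simple poles at $\beta_1>\cdots>\beta_m$ and, because $\deg(f)\le\deg(g)$, a partial-fraction expansion $R=q+\sum_i A_i/(t-\beta_i)$ with $q$ a constant and residues $A_i=f(\beta_i)/g'(\beta_i)$. Assuming every $c_1f+c_2g$ is real-rooted, the equation $R(t)=-c_2/c_1$ must have the maximal number of real solutions for every value $-c_2/c_1$; an interior local extremum of $R$ on a pole-free interval would make some nearby horizontal line meet $R$ fewer times than the degree permits, producing a non-real root of some $c_1f+c_2g$. Hence $R$ is strictly monotone between consecutive poles, which by $R'=-\sum_i A_i/(t-\beta_i)^2$ forces all residues $A_i$ to share a common sign. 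Since $g'(\beta_i)$ alternates in sign along $\beta_1>\cdots>\beta_m$, the residues have constant sign precisely when $f(\beta_i)$ alternates in sign, i.e. precisely when $f$ has exactly one root in each gap $(\beta_{i+1},\beta_i)$; this is the interlacing condition, completing the equivalence.

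The step I expect to be the main obstacle is the careful accounting on the two unbounded intervals $(\beta_1,\infty)$ and $(-\infty,\beta_m)$: the single (or, when $\ell=m$, possibly second) ``extra'' root must be located, and this is exactly where the hypothesis $\deg(f)\le\deg(g)\le\deg(f)+1$ and the signs of the leading coefficients enter, both in the conjugate-pair count of the forward direction and in the behavior of $R$ at $\pm\infty$ in the converse. A secondary technical point is the perturbation argument reducing weak interlacing and repeated roots to the simple, strictly interlacing case. Finally, I would add a brief orientation remark: the property that $c_1f+c_2g$ is real-rooted for all real $c_1,c_2$ is symmetric in $f$ and $g$, so it detects that $f$ and $g$ have interlacing roots, the stated orientation ``$f$ interlaces $g$'' being the one distinguished by $\deg(f)\le\deg(g)$ and forced outright when $\deg(f)<\deg(g)$.
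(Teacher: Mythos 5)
The paper contains no proof of this statement for you to be compared against: Theorem \ref{thm:real-root-interlacing} is quoted as a classical result of Obreschkoff, with a pointer to the literature, and the paper later invokes only its forward implication (interlacing implies real-rootedness of all combinations, in the proof of Theorem \ref{thm:real-rootedness-type-b}). Judged on its own, your proposal is essentially the standard proof of the Hermite--Kakeya--Obreschkoff theorem and is correct in outline. The forward direction --- sign alternation of $f$ at the roots $\beta_1>\dots>\beta_m$ of $g$, the intermediate value theorem producing $m-1$ real roots of $c_1f+c_2g$, and the conjugate-pair parity argument forcing the one possible leftover root to be real --- is complete modulo the routine limiting step (a limit of real-rooted polynomials of bounded degree is real-rooted or identically zero, by Hurwitz). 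The converse via the partial-fraction expansion of $f/g$, strict monotonicity between poles, and constancy of sign of the residues $A_i=f(\beta_i)/g'(\beta_i)$ is likewise the classical argument, and the bookkeeping you flag on the two unbounded intervals is indeed exactly where the hypothesis $\deg(f)\le\deg(g)\le\deg(f)+1$ enters.

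Two substantive caveats. First, your blanket reduction ``perturb and pass to the limit on both sides of the equivalence'' works only for the forward implication: given arbitrary $f,g$ all of whose combinations are real-rooted, there is no evident perturbation to a coprime, simple-rooted pair that retains the hypothesis. The standard repair is direct rather than by limits: factor out $\gcd(f,g)$ (every combination is divisible by it, and interlacing is preserved when the common factor is multiplied back), and exclude a root $\beta$ of $g$ of multiplicity $k\ge 2$ with $f(\beta)\neq 0$ by observing that for small $c$ of a suitable sign the roots of $g-cf$ near $\beta$ are approximately the $k$-th roots of a nonzero number, hence include non-real ones (for $k=2$ one of the two signs of $c$ works; for $k\ge 3$ either does). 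Second, your closing orientation remark is right in spirit but wrong in one detail: the orientation is \emph{not} ``distinguished by $\deg(f)\le\deg(g)$'' when the degrees are equal --- take $f=(t-1)(t-3)$ and $g=(t-2)(t-4)$, where all combinations are real-rooted, $g$ interlaces $f$, and $f$ does not interlace $g$. So the converse can only conclude, in the paper's own terminology, that $f$ and $g$ \emph{have interlacing roots}; the ``if and only if'' as quoted is a (common) imprecision in the statement itself, not a defect of your argument, and since the paper uses only the forward implication nothing downstream is affected.
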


We now prove Theorem \ref{thm:real-rootedness-type-b} 
using Theorem \ref{thm:real-root-interlacing}.

\begin{proof}[Proof of Theorem \ref{thm:real-rootedness-type-b}]
As $B_{n,n}(t)=B_{n, \ol{n}}(t)= 2^{n-1}tA_n(t)$, 
the real-rootedness of the polynomials $B_{n,n}(t)$ and $B_{n, \ol{n}}(t)$ follow from the real-rootedness of $A_n(t)$.  

We next prove that for $1 \leq k \leq n-1$, 
the polynomials $B_{n,k}(t)$ are real-rooted. We proceed by induction on $n$.
When, $n=1$, the assertion trivially holds. 
We assume the statement to be true for $n-1$ 
and prove that $B_{n,k}$ is real-rooted for $1 \leq k \leq n-1$.

Let $F_{n,k}(t)=tB_{n,k}(t)$. Using \eqref{eqn:poly-rec-type-b}, it can be shown that the polynomials $F_{n,k}(t)$ satisfy the following recurrence for $1 \leq k \leq n-1$:   
\begin{eqnarray*}
F_{n, k}(t) & = & [(2n-1)t-1] F_{n-1, k} (t) + 2t(1-t) D F_{n, k} (t). 
\end{eqnarray*}
By induction, the polynomial $F_{n-1,k}(t)$ is real-rooted. 
Moreover, 
the polynomial $DF_{n-1,k}(t)$ clearly interlaces $F_{n-1,k}(t)$. Therefore, $[(2n-1)t-1]F_{n-1,k}(t)$ interlaces $2t(1-t)DF_{n-1,k}(t)$. By Theorem 
\ref{thm:real-root-interlacing}, 
 the polynomial $F_{n,k}(t)$ is real-rooted. Hence, $B_{n,k}(t)$ is real-rooted. 
In an identical manner, 
one can show that the polynomials $B_{n,\ol{k}}(t)$ are real-rooted for $1 \leq k \leq n-1$. 
This completes the proof. 
\end{proof}

\subsection{Gamma positivity of symmetrized type B Eulerian polynomials}
\label{subsec:gamma_pos_symm_B_Eul}

The polynomial $B_{n,k}(t)$, in general, is not palindromic.  However, in this subsection, we  show that if  we  add two such polynomials  then we get a gamma-positive polynomial.
Our result is thus similar to the result 
\cite[Lemma 4.5]{barycentric-subdivision-petersen-nevo}
of Nevo, Petersen and Tenner.
Recall for $1 \leq k \leq n$, we had defined
$\ol{B}_{n,k}(t)=B_{n,k}(t)+B_{n,\ol{k}}(t) \hspace{2 mm} \mbox{ and }
\hspace{2 mm}
\widetilde{B}_{n,k}(t)=tB_{n,k}(t)+B_{n,\ol{k}}(t).$
We begin by showing palindromicity of both these polynomials.
\begin{proposition}
For $1\le k\le n$, the polynomials $\ol B_{n,k}(t)$ and
$\widetilde B_{n,k}(t)$ are palindromic. 
\end{proposition}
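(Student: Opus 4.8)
The plan is to produce palindromicity of both polynomials from a single sign-reversing involution on $\BB_n$ that interchanges $\BB_{n,k}$ with $\BB_{n,\ol{k}}$ while complementing the type B descent statistic. Concretely, let $\nu\colon\BB_n\to\BB_n$ be the negation map $\nu(\pi)=-\pi$, defined by $(-\pi)(i)=-\pi(i)$. I would first record the formal facts: $-\pi$ is again a signed permutation, $\nu$ is an involution, and $(-\pi)_1=\ol{\pi_1}$, so $\nu$ restricts to a bijection $\BB_{n,k}\to\BB_{n,\ol{k}}$ for every $k$.

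The crux of the argument is the identity $\des_B(-\pi)=n-\des_B(\pi)=\asc_B(\pi)$. To establish it I would compare $\pi$ and $-\pi$ position by position over $i\in\{0,1,\dots,n-1\}$, using the convention $\pi_0=0$. Since $-0=0$, negation fixes the prefixed $0$th entry, so for every such $i$ we have $(-\pi)_i>(-\pi)_{i+1}$ iff $-\pi_i>-\pi_{i+1}$ iff $\pi_i<\pi_{i+1}$. Thus $\nu$ converts every type B descent into a type B ascent and conversely, across all $n$ comparison positions, giving $\des_B(-\pi)=\asc_B(\pi)=n-\des_B(\pi)$. I expect this position-by-position check to be the main (though short) point requiring care: one must verify that the boundary position $i=0$ flips as well, which is exactly what the fixed value $\pi_0=0$ guarantees. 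Every other step is formal.

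With this identity in hand, substituting $\sigma=\nu(\pi)$ in the sum defining $B_{n,\ol{k}}(t)$ yields $B_{n,\ol{k}}(t)=\sum_{\pi\in\BB_{n,k}}t^{\,n-\des_B(\pi)}$. Hence
\[
\ol{B}_{n,k}(t)=\sum_{\pi\in\BB_{n,k}}\bigl(t^{\des_B(\pi)}+t^{\,n-\des_B(\pi)}\bigr),
\qquad
\widetilde{B}_{n,k}(t)=\sum_{\pi\in\BB_{n,k}}\bigl(t^{\,1+\des_B(\pi)}+t^{\,n-\des_B(\pi)}\bigr).
\]
In the first sum each monomial pair has exponents summing to $n$, so every term, and therefore the whole polynomial, is symmetric about the center $n/2$; in the second sum each pair has exponents summing to $n+1$, giving symmetry about $(n+1)/2$. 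Equivalently, these are the functional equations $t^{n}\ol{B}_{n,k}(1/t)=\ol{B}_{n,k}(t)$ and $t^{n+1}\widetilde{B}_{n,k}(1/t)=\widetilde{B}_{n,k}(t)$, which is precisely the asserted palindromicity, with the centers of symmetry $n/2$ and $(n+1)/2$ later recorded in Theorem \ref{thm: type-b-gamma}.
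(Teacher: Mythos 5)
Your proposal is correct and follows essentially the same route as the paper: both use the negation map $\pi\mapsto-\pi$ as a bijection $\BB_{n,k}\to\BB_{n,\ol{k}}$ together with the identity $\des_B(-\pi)=n-\des_B(\pi)$, and then deduce palindromicity of $\ol{B}_{n,k}(t)$ and $\widetilde{B}_{n,k}(t)$ by pairing exponents (the paper phrases the $\widetilde{B}_{n,k}$ case as coefficient reversal, which is the same bookkeeping). Your explicit check of the boundary position $i=0$ via $\pi_0=0$ is a point the paper leaves implicit, but the argument is identical in substance.
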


\begin{proof}
Let $k>0.$
Define $f: \BB_{n,k} \cup \BB_{n, \ol{k}} \to \BB_{n,k} \cup \BB_{n, \ol{k}} $  by 
\[ f(\pi_1,\pi_2,\dots, \pi_n) = 
 \ol{\pi_1},\ol{\pi_2},\dots,\ol{\pi_n}. \] We note that $\des(\pi)= n-\des(f(\pi))$. This proves that $\ol B_{n,k}(t)$ is palindromic. 
Let 
 $B_{n,k}(t) = a_0 + \dots +a_n t^n $. Then, 
$ B_{n,\ol{k}}(t)= a_n + \dots + a_0t^n.$
 Then, \[\widetilde B_{n,k}(t)= a_n + (a_0+a_{n-1})t+ \dots + (a_{n-1}+a_0)t^n+a_nt^{n+1}.\]
 Clearly, $\widetilde B_{n,k}(t)$ is palindromic, completing the proof. 
\end{proof}

\begin{lemma}
\label{lem:type-b-rec}
For 
positive integers $n,k$ with $1 \leq k \leq n$, we have 
\begin{eqnarray}
    B_{n+1,k}(t) & = & \displaystyle  \sum_{i=\ol{n}}^{\ol{1}} B_{n,i}(t)+ t \displaystyle \sum _{i=1}^{k-1} B_{n,i} (t) + \displaystyle \sum_{i=k}^n B_{n,i} (t), \label{eqn:first_eqn} \\
    B_{n+1,\ol{k}}(t) & = & t \displaystyle  \sum_{i=\ol{n}}^{\ol{k}} B_{n,i}(t)+  \displaystyle \sum _{i=\ol{k-1}}^{1} B_{n,i} (t) + t \displaystyle \sum_{i=1}^n B_{n,i} (t).  \label{eqn:second_eqn}
\end{eqnarray}
\end{lemma}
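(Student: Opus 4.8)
The plan is to prove Lemma~\ref{lem:type-b-rec} by a direct combinatorial argument analogous to the recurrences in Theorem~\ref{thm:recurrence_descent_for_type_b}, but now tracking the \emph{first letter} of a signed permutation in $\BB_{n+1}$ rather than the number of descents alone. The key observation is that every $\pi \in \BB_{n+1,k}$ arises uniquely from some $\sigma \in \BB_n$ by inserting the value $n+1$ or $\ol{n+1}$ into a fixed position, or by placing one of $\pm k$ at the front; to get the first-letter-refined recurrence I would instead \emph{delete} the largest-magnitude letter $n+1$ (or $\ol{n+1}$) when it is not in the first position, and handle separately the case where the first letter itself has magnitude $n+1$. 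So first I would fix $k$ with $1 \le k \le n$ and set up a bijection between $\BB_{n+1,k}$ and a disjoint union of copies of the sets $\BB_{n,i}$ for various $i$, keeping careful track of how $\des_B$ changes under each deletion.

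The second and more careful step is to classify, for each $\sigma \in \BB_n$ with first letter $i$, into which positions $n+1$ or $\ol{n+1}$ may be inserted to produce an element of $\BB_{n+1,k}$, and whether this insertion raises the descent count or leaves it unchanged (contributing a factor $t$ or $1$ respectively). Because $n+1$ is the global maximum and $\ol{n+1}$ the global minimum, inserting them at a non-initial position is governed entirely by whether the neighbouring letters form a descent, exactly as in the proof of Theorem~\ref{thm:recurrence_descent_for_type_b}; the novelty is bookkeeping the first letter. The three sums on the right of \eqref{eqn:first_eqn} should correspond to: (i) the contribution where the inserted large letter forces a descent regardless of $i$ (the $\sum_{i=\ol n}^{\ol 1}$ term with no extra $t$, coming from negative first letters), (ii) permutations whose first letter $i$ satisfies $1 \le i \le k-1$, which pick up a factor $t$, and (iii) those with $k \le i \le n$, contributing without the extra $t$. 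I would verify each case by tracking where the letter $k$ sits relative to the deleted maximum and confirming the descent-count shift.

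The main obstacle I anticipate is getting the boundary of the index ranges and the placement of the factor $t$ exactly right, since the split at $i = k-1$ versus $i = k$ depends delicately on whether inserting the maximal letter immediately after the first letter creates or destroys a descent, and the signed setting doubles the casework through the contribution of $\ol{n+1}$ and the $\pi_0 = 0$ convention. To control this I would use the identity $\des_B(\pi) = n - \des_B(f(\pi))$ from the preceding palindromicity proof, together with the reversal-and-negation symmetry, to derive \eqref{eqn:second_eqn} from \eqref{eqn:first_eqn} rather than redoing the entire case analysis; concretely, applying the sign-flip map $f$ (or an analogous value-complementation $i \mapsto n+1-i$ on magnitudes) should interchange the roles of $k$ and $\ol{k}$ and convert one recurrence into the other, turning the factors $1$ and $t$ appropriately. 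Thus I expect the bulk of the work to lie in the clean derivation of \eqref{eqn:first_eqn}, with \eqref{eqn:second_eqn} following by symmetry.
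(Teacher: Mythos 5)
Your proposed derivation of \eqref{eqn:first_eqn} rests on inserting or deleting the maximal letters $n+1$, $\ol{n+1}$, and this cannot work. Since $1 \le k \le n$, the letter of magnitude $n+1$ never occupies the first position in an element of $\BB_{n+1,k}$, so deleting it (or, in the reverse direction, inserting it into a non-initial position of $\sigma \in \BB_n$) never changes the first letter. Consequently this operation bijects $\BB_{n+1,k}$ with pairs (element of $\BB_{n,k}$, insertion datum) and can only relate $B_{n+1,k}(t)$ to $B_{n,k}(t)$; it is exactly the mechanism behind Theorem \ref{thm:recurrence_descent_for_type_b} and Proposition \ref{rec:type-b-rec-poly}, and it is structurally incapable of producing the terms $B_{n,i}(t)$ with $i \neq k$ that fill the right-hand side of \eqref{eqn:first_eqn}. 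Indeed, your second step asks into which positions of $\sigma \in \BB_{n,i}$ one may insert $\pm(n+1)$ so as to land in $\BB_{n+1,k}$, and for $i \neq k$ the answer is: none. The idea actually needed (and used in the paper) is different: delete the \emph{first} letter $k$ of $\pi \in \BB_{n+1,k}$ and standardize the remaining word to an element of $\BB_n$; its first letter is then the standardization of $\pi_2$, which ranges over all values other than $\pm k$, and the three sums arise by comparing the descent at position $0$ (against $\pi_0=0$) with the descent at position $1$: when $\pi_2<0$, the descent $k>\pi_2$ of $\pi$ becomes the descent $0>\pi_2$ of the shortened word, so the count is unchanged; when $1 \le \pi_2 \le k-1$, a descent is lost, giving the factor $t$; and when $\pi_2 > k$, nothing changes.

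One part of your plan does survive: granting \eqref{eqn:first_eqn}, your symmetry argument for \eqref{eqn:second_eqn} is sound. The negation map $f$ gives $B_{n+1,\ol{k}}(t) = t^{n+1}B_{n+1,k}(1/t)$ and $t^{n}B_{n,i}(1/t) = B_{n,\ol{i}}(t)$, and substituting \eqref{eqn:first_eqn} into the first identity does yield \eqref{eqn:second_eqn}; this is a legitimate, arguably cleaner, alternative to the paper's ``similar argument'' for the second equation. But since \eqref{eqn:first_eqn} is the substance of the lemma and your route to it fails, the proposal has a genuine gap.
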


\begin{proof}
We consider \eqref{eqn:first_eqn} first.  Let $\pi \in \BB_{n+1,k}$ and we observe the effect of dropping $k$ from the first position of permutations in $\BB_{n+1}$. Ler $\pi'$ denote the permutation in $\BB_{\{1,2,\dots,k-1,k+1, \dots, n+1 \}}$ which we get from $\pi$ after dropping $k$. 

If $\ol{n+1} \leq \pi_2 \leq \ol{1}$, then dropping $\pi_1=k$ does not change the number of descents of $\pi$. 
If $1 \leq \pi_2 \leq k-1$,
then dropping $\pi_1=k$ decreases the number of descents of $\pi$ by $1$. Again, if $k+1 \leq  \pi_2 \leq n+1$, then dropping $\pi_1=k$ does not cause any change in the number of descents of $\pi$.
This completes the proof. As the proof of \eqref{eqn:second_eqn} follows by a similar argument, we omit it.
\end{proof}

We are now in a position to prove Theorem 
\ref{thm: type-b-gamma}.

\begin{proof} (Of Theorem \ref{thm: type-b-gamma}:)
We prove this by induction on $n$. 
It is easy to verify that $\ol B_{2,1}(t)=(1+t)^2, \widetilde B_{2,1}(t)= 2t(1+t), \ol B_{2,2}(t)=4t,$ 
and $\widetilde B_{2,2}(t)=2t(1+t).$ 
Thus the statement is true for $n=2$.
We assume the statement to be true 
for $n$ and prove this for $n+1$. 
By applying Lemma \ref{lem:type-b-rec},
we have
    \begin{eqnarray*}
        \ol B_{n+1,k}(t) &=& B_{n+1,k}(t)+B_{n+1,\ol{k}}(t)\\
        &=& (1+t) \sum_{i=\ol{n}}^{\ol{k}}B_{n,i}(t) + 2 \sum_{i=\ol{k-1}}^{\ol{1}} B_{n,i}(t)+2t \sum_{i=1}^{k-1}B_{n,i}(t) + (1+t) \sum_{i=k} ^n B_{n,i}(t)  \\
        & = & (1+t) \sum_{i=k}^n \ol B_{n,i}(t)+ 2 \sum_{i=1}^{k-1}\widetilde B_{n,i}(t).
        \label{nevo-trick}
    \end{eqnarray*}
By induction, $\ol B_{n,i}(t)$ is gamma positive with center of symmetry $n/2$ and $\widetilde B_{n,i}(t)$ is gamma positive with center of symmetry $(n+1)/2$.
Therefore, $\ol B_{n+1,k}(t)$ is gamma positive with center of symmetry $(n+1)/2$.  
In a similar way, one can show that 
\begin{equation*}
\widetilde B_{n+1}(t)
= 2t \sum_{i=k}^n \ol B_{n,i}(t) + (1+t) \sum_{i=1}^{k-1}\widetilde B_{n,i}(t). 
\end{equation*}
Thus, $\widetilde B_{n+1,k}(t)$ is gamma positive with center of symmetry $(n+2)/2$. 
Finally, we observe that $B_{n+1, n+1}(t)= B_{n+1, \ol{n+1}} (t)= \sum_{i=1}^n \widetilde B_{n,i} (t)$. 
Hence, 
both the polynomials $\ol B_{n+1, n+1}(t)$ and $\widetilde B_{n+1, n+1}(t)$ are gamma positive 
and have respective centers of symmetry $(n+1)/2$ and $(n+2)/2$. This completes the proof of the theorem. 
\end{proof}

\section{Proof of Theorem \ref{thm: typeb-typea-des-carlitz}}
\label{sec:proof_thm-X}

The goal of this section is to prove Theorem  \ref{thm: typeb-typea-des-carlitz}.
Towards this, define
$\displaystyle P_{n,i}(t)= \frac{ B_{n,i}(t)}{t^{1/2}(1-t)^n} $ when $i \geq 1$
and 
$\displaystyle Q_{n,i}(t)= \frac{ B_{n,i}(t)}{t^{3/2}(1-t)^n} $ when $i \leq -1$. 

\begin{proposition} 
\label{prop:rec-satisfied-by-restricted-typeb-anotherform} 
For positive integers $n,i$ with $1 \leq i \leq n$, the polynomials $B_{n,i}(t)$ satisfy the following recurrence relation:
\begin{equation}
\label{eqn:prop-1}
\frac{B_{n,i}(t)}{t^{1/2}(1-t)^{n}}
= \frac{d}{dt} \bigg(2t^{1/2}\frac{B_{n-1,i}(t)}{(1-t)^{n-1}}\bigg). 
\end{equation}
Therefore, we have 
$P_{n,i}(t)= \frac{d}{dt} (2tP_{n-1,i}(t)) $ with the initial condition 
$P_{i,i}(t)
=\frac{B_{i,i}(t)}{t^{1/2}(1-t)^{i}}
= \frac{2^{i-1}t^{1/2}A_{i-1}(t)}{(1-t)^{i}}$ when $i > 0$.

When  $i<0$,
the polynomials $B_{n,i}(t)$ satisfy the following recurrence relation:
\begin{equation}
\label{eqn:prop-3}
\frac{B_{n,i}(t)}{t^{3/2}(1-t)^{n}}= \frac{d}{dt} \bigg(2t^{-1/2}\frac{B_{n-1,i}(t)}{(1-t)^{n-1}}\bigg). 
\end{equation}
Therefore, we have 
$
Q_{n,i}(t)= \frac{d}{dt} (2tQ_{n-1,i}(t)) 
$
with the initial condition 
$Q_{|i|,i}(t)
=\frac{B_{|i|,i}(t)}{t^{3/2}(1-t)^{|i|}}
= \frac{2^{|i|-1}A_{|i|-1}(t)}{t^{1/2}(1-t)^{|i|}}$ when $i < 0$.
\end{proposition}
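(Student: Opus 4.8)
The plan is to recognize that the two displayed differentiation identities \eqref{eqn:prop-1} and \eqref{eqn:prop-3} are nothing more than reformulations of the polynomial recurrences \eqref{eqn:poly-rec-type-b} and \eqref{eqn:poly-rec-type-b-neg} of Proposition \ref{rec:type-b-rec-poly}, rewritten so that the right-hand side appears as a single derivative. Consequently the whole argument is a direct verification, carried out separately for the positive branch ($i\ge 1$) and the negative branch ($i<0$), followed by a one-line substitution of definitions to pass to the compact $P$- and $Q$-recurrences.

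For \eqref{eqn:prop-1} I would expand the right-hand side using the product and chain rules. Abbreviating $B = B_{n-1,i}(t)$ and $B' = DB_{n-1,i}(t)$, differentiating $2t^{1/2}(1-t)^{-(n-1)}B$ produces three terms, one each from differentiating $t^{1/2}$, from differentiating $(1-t)^{-(n-1)}$, and from differentiating $B$. Multiplying through by the common factor $t^{1/2}(1-t)^n$ clears all fractional powers and negative exponents and leaves $(1-t)B + 2t(n-1)B + 2t(1-t)B'$. Collecting the $B$-terms gives the coefficient $1 + (2n-3)t$, so the identity reduces exactly to \eqref{eqn:poly-rec-type-b}, which is already known. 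The negative branch \eqref{eqn:prop-3} is handled identically: differentiating $2t^{-1/2}(1-t)^{-(n-1)}B$ and multiplying by $t^{3/2}(1-t)^n$ yields $-(1-t)B + 2t(n-1)B + 2t(1-t)B'$, whose $B$-coefficient is $(2n-1)t - 1$, matching \eqref{eqn:poly-rec-type-b-neg}.

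To obtain the compact recurrences $P_{n,i}(t) = \frac{d}{dt}(2tP_{n-1,i}(t))$ and $Q_{n,i}(t) = \frac{d}{dt}(2tQ_{n-1,i}(t))$, I would simply substitute the definitions $P_{n-1,i}(t) = B_{n-1,i}(t)/(t^{1/2}(1-t)^{n-1})$ and $Q_{n-1,i}(t) = B_{n-1,i}(t)/(t^{3/2}(1-t)^{n-1})$. In the first case $2tP_{n-1,i}(t) = 2t^{1/2}B_{n-1,i}(t)/(1-t)^{n-1}$, which is exactly the argument of the derivative in \eqref{eqn:prop-1}; in the second case $2tQ_{n-1,i}(t) = 2t^{-1/2}B_{n-1,i}(t)/(1-t)^{n-1}$, the argument of the derivative in \eqref{eqn:prop-3}. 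The initial conditions then follow from the diagonal evaluation $B_{i,i}(t)= 2^{i-1}tA_{i-1}(t)$ recorded in Proposition \ref{rec:type-b-rec-poly}: dividing by $t^{1/2}(1-t)^i$ gives the stated $P_{i,i}(t)$, and dividing $B_{|i|,i}(t)$ by $t^{3/2}(1-t)^{|i|}$ gives the stated $Q_{|i|,i}(t)$.

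There is no deep obstacle here; the proposition is an algebraic repackaging of Proposition \ref{rec:type-b-rec-poly} engineered to make the later extraction of the Carlitz-type identity of Theorem \ref{thm: typeb-typea-des-carlitz} clean. The only point requiring genuine care is the bookkeeping of the half-integer powers $t^{\pm 1/2}$ together with the $(1-t)^{-(n-1)}$ factor when applying the product rule, and tracking the sign flip in the negative branch that turns $1+(2n-3)t$ into $(2n-1)t-1$; once the common denominator is cleared, both computations collapse exactly onto the two known recurrences.
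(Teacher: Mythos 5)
Your proposal is correct and follows essentially the same route as the paper: both expand the derivative of $2t^{\pm 1/2}(1-t)^{-(n-1)}B_{n-1,i}(t)$ via the product rule and identify the result with the recurrences \eqref{eqn:poly-rec-type-b} and \eqref{eqn:poly-rec-type-b-neg} of Proposition \ref{rec:type-b-rec-poly}, then read off the $P$- and $Q$-recurrences and the initial conditions from $B_{i,i}(t)=B_{|i|,\ol{|i|}}(t)=2^{|i|-1}tA_{|i|-1}(t)$. The only cosmetic difference is directional (you clear denominators to reduce to the known recurrence, while the paper computes the derivative forward until Proposition \ref{rec:type-b-rec-poly} applies), which is the same algebra.
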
 

\begin{proof}
We have  
\begin{eqnarray*}
    &&\frac{d}{dt} \left(2t^{1/2}(1-t)^{-n+1}B_{n-1,i}(t) \right)\\
    &=& t^{-1/2}(1-t)^{-n+1} B_{n-1, i}(t) + 2(n-1)t^{1/2}(1-t)^{-n}B_{n-1,i}(t) + 2t^{1/2}(1-t)^{-n+1}B'_{n-1,i}(t)\\
    &=&(t^{-1/2}(1-t)^{-n})\bigg(((1-t)+2t(n-1))B_{n-1,i}(t)+2t(1-t)B'_{n-1,i}(t)\bigg)\\
&=& (t^{-1/2}(1-t)^{-n}) \bigg( ((2n-3)t+1)B_{n-1,i}(t) + 2t(1-t)B'_{n-1,i}(t) \bigg)\\
&=& t^{-1/2}(1-t)^{-n}B_{n,i}(t)
\end{eqnarray*}
In the last line, 
we use Proposition \ref{rec:type-b-rec-poly}. 
This proves \eqref{eqn:prop-1}. We now consider the case when $i < 0$. Though proof is identical,  we give it  for the sake of completeness. In this case, we have 
\begin{eqnarray*}
&&\frac{d}{dt} \left(2t^{-1/2}(1-t)^{-n+1}B_{n-1,i}(t) \right)\\
&=& - t^{-3/2}(1-t)^{-n+1} B_{n-1, i}(t) + 2(n-1)t^{-1/2}(1-t)^{-n}B_{n-1,i}(t) + 2t^{-1/2}(1-t)^{-n+1}B'_{n-1,i}(t)\\
&=&(t^{-3/2}(1-t)^{-n})\bigg(((t-1)+2t(n-1))B_{n-1,i}(t)+2t(1-t)B'_{n-1,i}(t)\bigg)\\
&=& (t^{-3/2}(1-t)^{-n}) \bigg( ((2n-1)t-1)B_{n-1,i}(t) + 2t(1-t)B'_{n-1,i}(t) \bigg)\\
&=& t^{-3/2}(1-t)^{-n}B_{n,i}(t).
\end{eqnarray*} 
This proves \eqref{eqn:prop-3}. The proof is complete. 
\end{proof}  


\begin{proof}[Proof of Theorem \ref{thm: typeb-typea-des-carlitz}]
We first consider the case when $i>0$ and show \eqref{eqn:carlitz-pos-i} by induction on $n$. The base case is when $n=i$. 
Here, we have 
\begin{eqnarray*}
\frac{P_{i,i}(t)}{2^{i-1}}
&=& \frac{t^{1/2}A_{i-1}(t)}{(1-t)^{i}} = t^{1/2}\times \sum_{k\ge 0} (k+1)^{i-1}t^k
=  \sum_{k\ge 0} (k+1)^{i-1}t^{\frac{2k+1}{2}}.
\end{eqnarray*}

\noindent
By Proposition \ref{prop:rec-satisfied-by-restricted-typeb-anotherform},
we have 
\begin{eqnarray*}
P_{i+1,i}(t) 
&=& 2\frac{d}{dt}\left(tP_{i,i}(t)\right) 
=2^{i} \sum_{k\ge 0}   (k+1)^{i-1} \frac{2k+3}{2} t^{\frac{2k+1}{2}}. 
\mbox{  Therefore, we get} \\
P_{n,i}(t)& =& 2^{n-1}\sum_{k\ge 0}\bigg( \frac{(2k+3)^{n-i}}{2^{n-i}}(k+1)^{i-1}\bigg)t^{\frac{2k+1}{2}}. 
\mbox{ This gives} \\
\frac{B_{n,i}(t)}{ (1-t)^{n}}& =& 2^{n-1}\sum_{k\ge 0}\bigg( \frac{(2k+3)^{n-i}}{2^{n-i}} (k+1)^{i-1}\bigg)t^{k+1}. 
\end{eqnarray*} 
Or equivalently, 
\begin{eqnarray*}
    \frac{B_{n,i}(t)}{ (1-t)^{n}}& =& \sum_{k\ge 1}\bigg( (2k+1)^{n-i}(2k)^{i-1}\bigg)t^{k}. 
\end{eqnarray*}
This completes the proof of \eqref{eqn:carlitz-pos-i}. 
We now consider the case when $i < 0$ and prove \eqref{eqn:carlitz-neg-i}. Here, we have
\begin{equation*}
\frac{Q_{|i|,i}(t)}{2^{|i|-1}}
= \frac{A_{|i|-1}(t)}{t^{1/2}(1-t)^{|i|}}= 
 \frac{1}{t^{1/2}}\times \sum_{k\ge 0} (k+1)^{|i|-1}t^k. 
\end{equation*}
By Proposition \ref{prop:rec-satisfied-by-restricted-typeb-anotherform}, we have 
\begin{equation*}
Q_{|i|+1,i}(t)
= 2 \frac{d}{dt} (tQ_{|i|,i}(t))
= 2^{|i|}\sum_{k\ge 0} \bigg( \frac{(2k+1)}{2}
(k+1)^{|i|-1} \bigg) t^{\frac{2k-1}{2}}. 
\end{equation*} 
This gives us
\begin{equation*}
Q_{n,i}(t)
=  2^{n-1}\sum_{k\ge 0}\bigg( \frac{(2k+1)^{n-|i|}}{2^{n-|i|}}(k+1)^{|i|-1}\bigg)t^{\frac{2k-1}{2}}.
\end{equation*}
From this, we obtain
\begin{equation*}
\frac{B_{n,i}(t)}{(1-t)^{n}}
= 2^{n-1}\sum_{k\ge 0}\bigg( \frac{(2k+1)^{n-|i|}}{2^{n-|i|}}(k+1)^{|i|-1}\bigg)t^{k+1}. 
\end{equation*}
Or equivalently,
\begin{eqnarray*}
    \frac{B_{n,i}(t)}{(1-t)^{n}}
= \sum_{k\ge 1}\bigg( (2k-1)^{n-|i|}(2k)^{|i|-1}\bigg)t^{k}. 
\end{eqnarray*}
The proof of Theorem \ref{thm: typeb-typea-des-carlitz} is complete. 
\end{proof}

We have the following well known corollary due to Brenti \cite{brenti-q-eulerian-94}. 
\begin{corollary}[Brenti]
    For $n\ge 1$, we have 
    \begin{eqnarray}
        \frac{B_n(t)}{(1-t)^{n+1}}=\sum_{k\ge 0} (2k+1)^nt^k.
    \end{eqnarray}
\end{corollary}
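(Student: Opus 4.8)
The plan is to derive Brenti's identity by summing the refined Carlitz-type identities of Theorem \ref{thm: typeb-typea-des-carlitz} over all admissible first letters and then dividing by one extra factor of $(1-t)$. Recall that $B_n(t)=\sum_{\pi\in\BB_n}t^{\des_B(\pi)}$. First I would partition $\BB_n$ according to its first letter $\pi_1\in[-n,n]\setminus\{0\}$, which gives $\BB_n=\bigsqcup_{i=1}^n(\BB_{n,i}\sqcup\BB_{n,\ol{i}})$ and hence
\[
B_n(t)=\sum_{i=1}^n B_{n,i}(t)+\sum_{i=1}^n B_{n,\ol{i}}(t).
\]
Dividing by $(1-t)^n$ and substituting the two identities \eqref{eqn:carlitz-pos-i} and \eqref{eqn:carlitz-neg-i} expresses the right-hand side as a single series $\sum_{k\ge 1}(\cdots)t^k$, where for each fixed $k$ the coefficient is the pair of finite sums $\sum_{i=1}^n(2k+1)^{n-i}(2k)^{i-1}+\sum_{i=1}^n(2k-1)^{n-i}(2k)^{i-1}$.

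Next I would evaluate each finite sum with the elementary geometric identity $\sum_{i=1}^n a^{n-i}b^{i-1}=(a^n-b^n)/(a-b)$. Taking $(a,b)=(2k+1,2k)$ gives $(2k+1)^n-(2k)^n$, and taking $(a,b)=(2k-1,2k)$ gives $(2k)^n-(2k-1)^n$; the two middle powers $(2k)^n$ cancel, leaving the clean coefficient $(2k+1)^n-(2k-1)^n$ for every $k\ge 1$. The one point requiring care is the constant term: the identity permutation is the unique element of $\BB_n$ with $\des_B=0$ (since $\des_B(\pi)=0$ forces $0<\pi_1<\dots<\pi_n$), so $B_n(0)=1$, and this contribution sits entirely in the starting letter $i=1$. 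Incorporating it yields
\[
\frac{B_n(t)}{(1-t)^n}=1+\sum_{k\ge 1}\bigl[(2k+1)^n-(2k-1)^n\bigr]t^k.
\]

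Finally I would multiply through by $(1-t)^{-1}=\sum_{m\ge 0}t^m$ and extract the coefficient of $t^K$ in the resulting Cauchy product. The constant $1$ contributes $1$, while the series contributes the telescoping sum $\sum_{k=1}^K\bigl[(2k+1)^n-(2k-1)^n\bigr]=(2K+1)^n-1$; adding these produces $(2K+1)^n$, which is exactly the asserted coefficient, so $\frac{B_n(t)}{(1-t)^{n+1}}=\sum_{K\ge 0}(2K+1)^n t^K$. The geometric-sum cancellation and the closing telescope are routine; the genuine obstacle is the bookkeeping of the constant term at the starting letter $i=1$ (equivalently, correctly recording that the identity is the only descent-free signed permutation), which is precisely what supplies the $1$ that aligns the final answer with Brenti's formula rather than leaving it short by $(1-t)^{-1}$.
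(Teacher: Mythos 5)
Your proof is correct and follows essentially the same route as the paper: sum the refined identities of Theorem \ref{thm: typeb-typea-des-carlitz} over all first letters $i$, collapse the inner sums via $\sum_{i=1}^n a^{n-i}b^{i-1} = (a^n-b^n)/(a-b)$, and absorb the extra factor of $(1-t)$ by telescoping. If anything, your explicit treatment of the constant term (observing $B_n(0)=1$ and that this contribution lives in the $i=1$ case) is more careful than the paper's proof, which passes to the equivalent identity for $B_n(t)/(1-t)^n$ and leaves the leading $1$ implicit.
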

\begin{proof}
Equivalently, we show that 
\begin{eqnarray*}
    \frac{B_n(t)}{(1-t)^{n}}=1+\sum_{k\ge 1}\bigg( (2k+1)^n-(2k-1)^n \bigg)t^k.
\end{eqnarray*}
    Sum over all $i$ in Theorem \ref{thm: typeb-typea-des-carlitz} and use the identity 
    \begin{eqnarray*}
        (2k+1)^n-(2k-1)^n&=&(2k+1)^n-(2k)^n+(2k)^n-(2k-1)^n\\
    &=&\sum_{i=1}^{n}\bigg( (2k+1)^{i-1}(2k)^{n-i}+(2k)^{i-1}(2k-1)^{n-i} \bigg).
    \end{eqnarray*}
    Here, we are using the identity $a^n-b^n=(a-b)(\sum_{i=0}^{n-1} a^{n-1-i}b^i)$.
    The terms on the RHS are precisely the ones that appear in Theorem \ref{thm: typeb-typea-des-carlitz}. This completes the proof.
\end{proof}

\section*{Acknowledgement}
The first author acknowledges a NBHM Post-Doctoral Fellowship (File No.
0204/10(10)/2023/R{\&}D-II/2781) during the preparation of this work. Both the first and second authors profusely thank the National Board of Higher Mathematics, India for funding.

\bibliography{main}
\bibliographystyle{acm}
\end{document}